\DeclareFontFamily{U}{mathx}{\hyphenchar\font45}
\DeclareFontShape{U}{mathx}{m}{n}{
      <5> <6> <7> <8> <9> <10>
      <10.95> <12> <14.4> <17.28> <20.74> <24.88>
      mathx10
      }{}
\DeclareSymbolFont{mathx}{U}{mathx}{m}{n}
\DeclareMathAccent{\widecheck}{0}{mathx}{"71}
\DeclareMathAccent{\wideparen}{0}{mathx}{"75}
\newtheorem{main-theorem}{Theorem}
\newtheorem{proposition}{Proposition}[section]
\newtheorem{corollary}[proposition]{Corollary}
\newtheorem{lemma}[proposition]{Lemma}
\theoremstyle{remark}
\theoremstyle{definition}
\newtheorem*{acknowledgements}{Acknowledgements}
\DeclareMathOperator{\supp}{supp}
\newcommand{\R}{\mathbb{R}}
\newcommand{\C}{\mathbb{C}}
\newcommand{\N}{\mathbb{N}}
\newcommand{\Sph}{\mathbb{S}}
\newcommand{\dd}{\mathrm{d}}
\newcommand{\tm}{\mathrm{t}}
\newcommand{\x}{\mathrm{x}}
\newcommand{\Id}{\mathrm{Id}}
\newcommand{\Orth}{\mathrm{O}}
\newcommand{\T}{\mathsf{T}}
\def\ve{\varepsilon}
\def\ind{\mathbf 1}
\title[Time-independent potentials]{The initial-to-final-state inverse problem with 
time-independent potentials}
\author[M. Cañizares]{Manuel Cañizares}
\address[M.\ Cañizares]{Manuel Cañizares BCAM - Basque Center for Applied mathematics, 48009 Bilbao, Spain,}
\email{\href{mailto:mcanizares@bcamath.org}{\textrm{mcanizares@bcamath.org}}}
\author[P. Caro]{Pedro Caro}
\address[P.\ Caro]{Pedro Caro
Basque Center for Applied Mathematics and Ikerbasque (Basque Foundation for Science) Bilbao, Spain}
\email{\href{mailto:pcaro@bcamath.org}{\textrm{pcaro@bcamath.org}}}
\author[I. Parissis]{Ioannis Parissis}
\address[I.\ Parissis]{Departamento de Matem\'aticas, Universidad del Pa\'is Vasco, Aptdo. 644, 48080 Bilbao, Spain and Ikerbasque, Basque Foundation for Science, Bilbao, Spain}
\email{\href{mailto:ioannis.parissis@ehu.eus}{\textrm{ioannis.parissis@ehu.eus}}}
\author[T. Zacharopoulos]{Thanasis Zacharopoulos}
\address[T.\ Zacharopoulos]{Thanasis Zacharopoulos
BCAM - Basque Center for Applied Mathematics Bilbao, Spain}
\email{\href{mailto:azacharopoulos@bcamath.org}{\textrm{azacharopoulos@bcamath.org}}}
\begin{document}

\begin{abstract}The initial-to-final-state inverse problem consists in determining a 
quantum Hamiltonian assuming the knowledge of the state of the system at some fixed time, for every initial 
state. This problem was formulated by Caro and Ruiz and motivated by the data-driven 
prediction problem in quantum mechanics. {Caro and Ruiz} analysed the question of uniqueness for 
Hamiltonians of the form $-\Delta + V$ with an electric potential $V = V(\tm, \x)$ that 
depends on the time and space variables. In this context, {they} proved that 
uniqueness holds in dimension $n \geq 2$
whenever the potentials are bounded and have super-exponential decay at infinity. Although their result does not seem to be optimal, one 
would expect at least some degree of exponential decay to be necessary for the potentials. However, in this 
paper, we show that by restricting the analysis to Hamiltonians with time-independent 
electric potentials, namely $V = V(\x)$, uniqueness can be established for bounded integrable 
potentials  exhibiting only super-linear decay at infinity, in any dimension $n \geq 2$.
This surprising improvement is possible because, unlike 
Caro and Ruiz's approach, our argument avoids the use of complex geometrical optics (CGO). Instead, we 
rely on the construction of stationary states at different 
energies---this is possible because the 
potential does not depend on time. These states will have an explicit leading term,
given by a Herglotz wave, plus a correction term that will vanish as the energy grows. Besides the significant relaxation of decay assumptions on the potential, the avoidance of CGO solutions is important in its own right, since such solutions are not readily available in more complicated geometric settings.
\end{abstract}

\date{\today}

\subjclass[2010]{35R30, 35J10, 81U40.}
\keywords{Initial-to-final-state map, Schrödinger equation, uniqueness, inverse problem, inverse problems}

\maketitle

\section{Introduction}
In quantum mechanics, the family of wave functions 
$\{ u(t, \centerdot) : t \in [0, T] \} $ describes
the state of the system during an interval of time
$[0, T]$. If the motion takes place 
only under the influence of an electric potential $V$ and the initial 
state $u(0,  \centerdot)$ is prescribed by $f$, then states
\[u : (t, x) \in [0,T] \times \R^n \mapsto u(t, x) \in \C \]
are solutions of the initial-value problem for the Schr\"odinger equation
\begin{equation}
	\left\{
		\begin{aligned}
		& i\partial_\tm u = - \Delta u + V u & & \textnormal{in} \enspace (0,T) \times \R^n, \\
		& u(0, \centerdot) = f &  & \textnormal{in} \enspace \R^n.
		\end{aligned}
	\right.
	\label{pb:IVP}
\end{equation}
It is a well known fact
that, if $V \in L^1((0,T); L^\infty(\R^n))$, then for every $f \in L^2(\R^n)$
there exists a unique 
$u \in C([0,T]; L^2(\R^n))$ solving \eqref{pb:IVP}.
Moreover, the linear map 
$ f \in L^2(\R^n) \mapsto u \in C([0, T]; L^2(\R^n)) $ is bounded.
The solutions with {the} previous properties will be referred to as \emph{physical}.

Using the physical solutions associated to a potential $V$,
Caro and Ruiz formulated in \cite{zbMATH07801151} an
inverse problem consisting in determining the electric potential from data 
measured only at the initial and final times. These data were modelled by
the \textit{initial-to-final-state map}, which is defined by
\[\mathcal{U}_T : f \in L^2(\R^n) \mapsto u(T, \centerdot) \in L^2(\R^n),\]
where $u$ is the solution of the problem \eqref{pb:IVP}. This mapping is bounded
in $L^2 (\R^n)$. In {\cite{zbMATH07801151}},
Caro and Ruiz proved {that} $\mathcal{U}_T$ uniquely determines the potential $V = V(\tm, \x)$
in dimension $n \geq 2$ 
whenever $V \in L^1((0, T); L^\infty (\R^n))$ has super-exponential decay, that is,
$ e^{\rho |\x|} V \in L^\infty((0, T) \times \R^n) $ for all $\rho > 0$.
The reason that explains why the super-exponential decay was needed in \cite{zbMATH07801151}
is the use of a family of solutions, {usually called complex 
geometrical optics} (CGO) solutions, that grow exponentially {at infinity}. In light of \cite{zbMATH04103631,zbMATH00554321,zbMATH02208390}
one can expect that this result might be improved {by} only requiring
 {weaker} exponential decay of the following type: there exists an $\varepsilon > 0$ so that
$ e^{\varepsilon |\x|} V \in L^\infty((0, T) \times \R^n)$. However, it does not seem 
reasonable to hope to remove any kind of exponential decay \cite{zbMATH00850580}.

This paper is driven by two main objectives. First, we aim to analyze the initial-to-final-state inverse problem with the goal of eliminating any exponential decay assumptions on the potential. This objective is intrinsically motivated and forms part of a broader program to identify the widest class of potentials that allow for uniqueness. Second, we seek to establish a proof strategy that entirely avoids the use of CGO solutions. The motivation for this is that such solutions are not readily available in certain geometric settings where the uniqueness problem remains both significant and compelling, such as the $n$-dimensional torus, the unit sphere in  $\R^n$, or, more generally, an $n$-dimensional compact Riemannian manifold. 

Interestingly, the second objective supports the first: we successfully replace CGO solutions with the more well-behaved stationary state solutions, described below. This approach not only avoids the use of CGO solutions but also accommodates a significantly broader class of potentials, requiring only super-linear decay at infinity.

These results are the content of our main result, {Theorem~\ref{th:bounded}} below, which directly addresses a positive outcome to the first objective in the case of time-independent potentials.  The second objective—the avoidance of CGO solutions—is embedded within the proof of {Theorem~\ref{th:bounded}} and is explored throughout the paper. In order to present our result
it is convenient to introduce, for $V \in L^\infty (\R^n)$, the quantity
\[\vvvert V \vvvert = \sum_{j \in \N_0} 2^j \| V \|_{L^\infty (D_j)}, \]
which might be $\infty$, but otherwise it has the properties of a norm.
Here
\[ D_0 = \{ x \in \R^n : |x| \leq 1 \},\quad D_j = \{ x \in \R^n : 2^{j-1} < |x| \leq 2^j \}, \, \forall j \in \N. \]

\begin{main-theorem}\label{th:bounded}\sl  
Consider $V_1$ and $V_2$ in $L^1(\R^n) \cap L^\infty (\R^n)$ such that $  $
$\vvvert V_j \vvvert < \infty $ for $j \in \{1, 2 \}$.
Let $\mathcal{U}_T^1 $ and $ \mathcal{U}_T^2$ denote their
corresponding initial-to-final-state maps. Then,
\[ \mathcal{U}_T^1 = \mathcal{U}_T^2 \Rightarrow V_1 = V_2.\]
\end{main-theorem}

As anticipated, this result demonstrates that restricting the analysis to Hamiltonians with time-independent electric potentials allows for a significant relaxation of the required decay conditions on the potential.  This remarkable  improvement is possible by the freedom in the time variable, which allows us to adopt an approach based on constructing stationary states at high energies—--where energy can be interpreted as the Fourier-dual variable of time.

The scheme of the proof of \Cref{th:bounded} 
is an adaptation of the one followed by Caro and 
Ruiz in \cite{zbMATH07801151}. From the equality $\mathcal{U}_T^1 = \mathcal{U}_T^2$,
we derive an orthogonality relation
\begin{equation}\label{id:orthogonality_relation}
\int_{\R^n} (V_1 - V_2) w_1 w_2 \, = 0
\end{equation}
for $w_j$, ${j\in\{1,2\}}$, solution of $(\Delta + \lambda^2 - V_j) w_j = 0$ in $\R^n$.
For $\lambda$ sufficiently large we will construct stationary states in the form 
$w_j = u_j + v_j$, where $u_j$ is a Herglotz wave and the correction term $v_j$ is negligible
with respect to $u_j$ as $\lambda$ grows. Then, the idea is to choose the Herglotz waves
in such a way that $\int_{\R^n} (V_1 - V_2) u_1 u_2$
allows {us} to compute the Fourier transform of
$V_1 - V_2$ as the energy $\lambda^2$ tends to infinity, and derive from the orthogonality 
relation that {it} has to vanish. This would imply that $V_1 = V_2$.

Other inverse problems for the dynamical Schr\"odinger equation have been 
previously formulated and analysed in 
\cite{zbMATH01886353,zbMATH05549395,zbMATH05655673,zbMATH05839237,zbMATH06733553,
zbMATH06864429}. Additionally, time-dependent Hamiltonians have been also considered
in \cite{zbMATH05379127,zbMATH06516179,zbMATH06769718,zbMATH07033617,zbMATH07242805}.
In most of these articles, the authors have studied 
inverse problems with data given by a dynamical 
Dirichlet-to-Neumann map defined on the boundary of a domain where the 
non-constant parts of the Hamiltonians are confined. Note that 
any type of dynamical Dirichlet-to-Neumann map in our context would correspond to invasive 
data.

The contents of the paper are as follows. In \Cref{sec:stationary_states} we construct
the stationary states that will be plugged into
\eqref{id:orthogonality_relation}.  \Cref{sec:orthogonality} is devoted to prove the 
orthogonality relation. In \Cref{sec:uniqueness} we prove \Cref{th:bounded}.

\section{Stationary states}\label{sec:stationary_states}

In this section we construct the stationary states that will be plugged into the orthogonality
relation \eqref{id:orthogonality_relation}. We start by introducing some useful Banach spaces, 
then we provide some bounds for Herglotz waves, and we finish by constructing the 
stationary states as perturbations of Herglotz waves.

\subsection{Some useful spaces}
The Banach spaces $B(\R^n)$ and $B^\ast (\R^n)$ are defined through the norms
\[
\|f\|_{B(\R^n)}\coloneqq \sum_{j\in\mathbb N_0} 2^{j/2}\|f\|_{L^2(D_j)},\qquad \|f\|_{B^*(\R^n)}\coloneqq \sup_{j\in\mathbb N_0}\left(2^{-j/2}\|f\|_{L^2(D_j)}\right).
\]
It is not hard to see that $B^*$ is the dual of $B$, \cite[\S 14.1]{HormII}, and 
\[
B(\R^n)\subset L^2(\R^n)\subset B^*(\R^n),\qquad \|f\|_{B^* (\R^n)}\leq \|f\|_{L^2(\R^n)}\leq \|f\|_{B(\R^n)}.
\]

\subsection{Herglotz waves}
For $\lambda > 0$ and $f \in C^\infty (\Sph^{n - 1})$, we define the Herglotz wave
\[ E_\lambda f (x) = \int_{\Sph^{n - 1}} e^{-i \lambda x \cdot \theta} f (\theta) \, \dd \sigma (\theta) \quad \forall x \in \R^n. \]
Here $\sigma$ denotes the surface measure in $\Sph^{n - 1}$. 

\begin{lemma}\label[lemma]{lem:B*extension}\sl There exists a constant $C > 0$, that only might depend on $n$, 
such that
\[\| E_\lambda f \|_{B^\ast (\R^n)} \leq \frac{C}{\lambda^{(n - 1)/2}} \| f \|_{L^2(\Sph^{n - 1})}\]
for all $f \in C^\infty (\Sph^{n - 1})$ and $\lambda > 0$. Additionally, for all $\lambda > 0$ 
and $f \in C^\infty (\Sph^{n - 1})$ we have that $(\Delta + \lambda^2) E_\lambda f = 0$
in $\R^n$.
\end{lemma}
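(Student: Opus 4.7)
The differential equation $(\Delta + \lambda^2)E_\lambda f = 0$ is immediate: differentiating under the integral sign gives $(\Delta + \lambda^2)e^{-i\lambda x\cdot\theta} = (-\lambda^2|\theta|^2 + \lambda^2)e^{-i\lambda x\cdot\theta} = 0$ since $|\theta| = 1$ on $\Sph^{n-1}$, and the integrand is dominated uniformly in $x$ on compact sets so the differentiation is justified.

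For the $B^*$ bound, my plan is to argue by duality. Since $B^*(\R^n) = B(\R^n)^\ast$ is noted earlier in the paper, it suffices to prove that
\[
\Bigl| \int_{\R^n} E_\lambda f(x)\, \overline{g(x)}\, \dd x \Bigr| \leq C\, \lambda^{-(n-1)/2} \|f\|_{L^2(\Sph^{n-1})}\, \|g\|_{B(\R^n)}
\]
for every $g \in B(\R^n)$. An application of Fubini converts the pairing into
\[
\int_{\Sph^{n-1}} f(\theta)\, \overline{\hat g(-\lambda \theta)}\, \dd\sigma(\theta),
\]
and Cauchy--Schwarz on the sphere bounds this by $\|f\|_{L^2(\Sph^{n-1})}$ times $\bigl(\int_{\Sph^{n-1}} |\hat g(\lambda\theta)|^2\, \dd\sigma(\theta)\bigr)^{1/2}$. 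Rescaling $\xi = \lambda\theta$, with $\dd\sigma_\lambda(\xi) = \lambda^{n-1}\dd\sigma(\theta)$, turns the inner integral into $\lambda^{-(n-1)}\int_{\lambda \Sph^{n-1}} |\hat g(\xi)|^2\, \dd\sigma_\lambda(\xi)$. The desired inequality therefore reduces to the trace-type estimate
\[
\int_{\lambda \Sph^{n-1}} |\hat g(\xi)|^2\, \dd\sigma_\lambda(\xi) \leq C \|g\|_{B(\R^n)}^2,
\]
with $C$ independent of $\lambda > 0$.

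This last inequality is precisely the Agmon--H\"ormander restriction theorem for the sphere, which is already in the Hörmander reference cited for the $B$--$B^\ast$ duality (Vol.~II, \S14.1); the uniformity in $\lambda$ comes from the non-vanishing Gaussian curvature of the sphere together with a scaling argument, and this is the only real content of the lemma. Should one wish to avoid citing it as a black box, the same estimate can be obtained by a $TT^\ast$ computation: the operator $E_\lambda E_\lambda^\ast$ acts as convolution with the kernel $K_\lambda(z) = \widehat{d\sigma_{\Sph^{n-1}}}(\lambda z)$, whose Bessel-function asymptotics give $|K_\lambda(z)| \lesssim (1+\lambda|z|)^{-(n-1)/2}$; bounding the bilinear form $\langle E_\lambda E_\lambda^\ast g, g\rangle$ dyadically over the annuli $D_j$ recovers the factor $\lambda^{-(n-1)/2}$ together with the $B$-to-$B^\ast$ summability. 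The only point requiring care in either route is to track the $\lambda$ dependence correctly through the scaling, but that is purely bookkeeping.
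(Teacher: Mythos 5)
Your main route is sound and is essentially the dual formulation of the paper's argument. The paper works on the extension side: it writes $E_\lambda f(x)=(2\pi)^{n/2}\widehat{f\,\dd\sigma}(\lambda x)$, rescales each $\|E_\lambda f\|_{L^2(D_j)}$ into $\lambda^{-n/2}\|\widehat{f\,\dd\sigma}\|_{L^2(B(\lambda 2^j))}$, and invokes the trace theorem for the Fourier transform (H\"ormander, Theorem 7.1.26) in the form $\|\widehat{f\,\dd\sigma}\|_{L^2(B(R))}\lesssim R^{1/2}\|f\|_{L^2(\Sph^{n-1})}$, valid for all $R>0$. You instead dualize and reduce to the restriction form $\int_{\lambda\Sph^{n-1}}|\hat g|^2\,\dd\sigma_\lambda\lesssim\|g\|_{B}^2$; these are the same Agmon--H\"ormander estimate seen from the two sides of the pairing, so the content is identical and the paper's version is marginally more direct (no duality, and the $\lambda$-bookkeeping is a one-line rescaling). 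Two small cautions on your version: (i) the Fubini step needs $g$ compactly supported (a general $g\in B$ need not be in $L^1(\R^n)$ when $n\geq 2$), which is harmless since testing against $g\in L^2(D_j)$ already computes the $B^*$-norm; (ii) the uniformity in $\lambda$ of the trace estimate has nothing to do with Gaussian curvature --- the Agmon--H\"ormander trace theorem holds for flat pieces too; what matters is that the spheres $\lambda\Sph^{n-1}$ split into boundedly many uniformly Lipschitz graphs, and the estimate for a graph comes from Plancherel in the tangential variables.

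The genuine gap is in your fallback $TT^\ast$ argument. Knowing only $|K_\lambda(z)|\lesssim(1+\lambda|z|)^{-(n-1)/2}$ and summing the bilinear form $\langle K_\lambda\ast g,g\rangle$ dyadically with absolute values cannot recover the claimed power of $\lambda$: already for $g=\ind_{D_0}$ one has
\[
\int_{D_0}\int_{D_0}|K_\lambda(x-y)|\,\dd x\,\dd y\;\simeq\;\int_0^2(1+\lambda r)^{-\frac{n-1}{2}}r^{n-1}\,\dd r\;\simeq\;\lambda^{-\frac{n-1}{2}},
\]
whereas the $TT^\ast$ reduction requires $\langle K_\lambda\ast g,g\rangle\lesssim\lambda^{-(n-1)}\|g\|_B^2$ to yield $\|E_\lambda\|_{L^2(\Sph^{n-1})\to B^\ast}\lesssim\lambda^{-(n-1)/2}$. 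The triangle inequality thus loses half the decay and would only give $\lambda^{-(n-1)/4}$ in the lemma. The missing ingredient is the oscillation of $K_\lambda$ (equivalently, orthogonality across the sphere), which is exactly what the slicing/Plancherel proof of the Agmon--H\"ormander estimate, or the paper's direct appeal to Theorem 7.1.26, supplies. So keep the citation as the actual proof and drop, or substantially repair, the $TT^\ast$ sketch.
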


\begin{proof}
Start by noting that $E_\lambda f (x) = (2\pi)^{n/2} \widehat{f\, \dd \sigma} (\lambda x)$, where
$f\, \dd \sigma$ is the {compactly supported} distribution in $\R^n$ defined by
\[ \langle f\, \dd \sigma, \phi \rangle =  \int_{\Sph^{n - 1}} \phi f \, \dd \sigma, \quad \phi \in C^\infty (\R^n). \]
It is then clear that
\[ \| E_\lambda f \|_{L^2(D_j)} \leq \frac{(2\pi)^{n/2}}{\lambda^{n/2}} \| \widehat{f\, \dd \sigma} \|_{L^2(B(\lambda 2^j))}, \]
where $B(\lambda 2^j) = \{ y \in \R^n : |y| \leq \lambda 2^j \}$. Additionally, 
by the extension version of the 
trace theorem for the Fourier transform \cite[Theorem 7.1.26]{zbMATH01950198}, we have\[ \| \widehat{f\, \dd \sigma} \|_{L^2(B(\lambda 2^j))} \lesssim (2^j \lambda)^{1/2} \| f \|_{L^2(\Sph^{n - 1})}. \]
The implicit constant here only depends on $n$.
Furthermore, the fact that the identity
$(\Delta + \lambda^2) E_\lambda f = 0$ holds in $\R^n$ follows from a direct computation {using}
the definition of $E_\lambda$.
\end{proof}

\subsection{Construction of the correction term}
For $\lambda > 0$ and $f \in \mathcal{S} (\R^n)$, we define
\[ P_\lambda f (x) = \frac{1}{(2\pi)^{n/2}} \, \mathrm{p.v.} 
\int_{\R^n} \frac{e^{i x \cdot \xi}}{\lambda^2 - |\xi|^2} \widehat{f}(\xi) \, \dd \xi 
\quad \forall x \in \R^n. \]
Here $\mathrm{p.v.} \int_{\R^n}$ stands for principal value,
and {is defined as} the limit of the 
integrals over the sets $\{ \xi \in \R^n : |\lambda^2 - |\xi|^2| > \ve \}$ as
$\ve$ tends to $0$.

\begin{lemma}\label[lemma]{lem:p.v.}\sl
There exists a constant $C > 0$, that {depends only} on $n$, such that
\[\| P_\lambda f \|_{B^\ast (\R^n)} \leq \frac{C}{\lambda} \| f \|_{B(\R^n)}\]
for all $f \in \mathcal{S} (\R^n)$ and $\lambda > 0$.
Additionally, for every $\lambda > 0$ and every $f \in B(\R^n)$ we have that 
$(\Delta + \lambda^2) P_\lambda f = f$ in $\R^n$.
\end{lemma}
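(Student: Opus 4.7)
\Cref{lem:p.v.} combines a formal PDE identity and the classical Agmon–Hörmander resolvent estimate for $-\Delta$ in principal-value form. The PDE identity is quick: for $f\in\mathcal{S}$ one passes $\Delta+\lambda^2$ under the principal-value integral, which multiplies the integrand by $\lambda^2-|\xi|^2$; since $(\lambda^2-|\xi|^2)\,\mathrm{p.v.}\frac{1}{\lambda^2-|\xi|^2}=1$ as tempered distributions, the singular factor cancels and Fourier inversion returns $f$.

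For the $B\to B^*$ estimate I would decompose the symbol by partial fractions,
\[
\frac{1}{\lambda^2-|\xi|^2}=\frac{1}{2\lambda}\Bigl(\frac{1}{\lambda+|\xi|}+\mathrm{p.v.}\frac{1}{\lambda-|\xi|}\Bigr).
\]
The first summand is a Fourier multiplier of $L^\infty$-norm $1/\lambda$ and, thanks to the continuous inclusions $B\hookrightarrow L^2\hookrightarrow B^*$ recorded in \S2.1, defines a $B\to B^*$ operator of norm at most $1/\lambda$; together with the prefactor $1/(2\lambda)$ this piece contributes $O(\lambda^{-2})$, stronger than required. It therefore suffices to show that $T_\lambda f=\mathrm{p.v.}\,\mathcal{F}^{-1}\bigl[(\lambda-|\xi|)^{-1}\hat f\bigr]$ is $B\to B^*$ bounded \emph{uniformly} in $\lambda$, after which the prefactor $1/(2\lambda)$ produces the $1/\lambda$ gain.

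For that uniform bound I would pass to polar coordinates $\xi=s\theta$, so that the inner integral over $\Sph^{n-1}$ becomes a Herglotz wave $G_s(x)=\int_{\Sph^{n-1}}e^{isx\cdot\theta}\hat f(s\theta)\,d\sigma(\theta)$. \Cref{lem:B*extension} (applied with $\lambda$ replaced by $s$) gives $\|G_s\|_{B^*}\le Cs^{-(n-1)/2}\|\hat f(s\,\cdot)\|_{L^2(\Sph^{n-1})}$, and the dual Fourier trace inequality $\|\hat f(s\,\cdot)\|_{L^2(\Sph^{n-1})}\le Cs^{-(n-1)/2}\|f\|_B$ follows from the same lemma by duality. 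Testing $T_\lambda f$ against $g\in B(\R^n)$ and applying Cauchy–Schwarz on the sphere reduces the matter to a scalar principal-value integral $\mathrm{p.v.}\int_0^\infty F(s)/(\lambda-s)\,ds$ with $|F(s)|\lesssim\|f\|_B\|g\|_B$, which one attacks via the $L^2$-boundedness of the Hilbert transform in the radial variable.

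The main obstacle is upgrading the $L^2$-in-$\lambda$ control provided by the Hilbert transform to the required pointwise-in-$\lambda$ bound; this is the technical core of Agmon–Hörmander and hinges on the precise cancellation between the trace and extension inequalities within a thin annulus around $\{|\xi|=\lambda\}$, the complement being absorbed by the smooth-multiplier argument already used for the non-singular piece.
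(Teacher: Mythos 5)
Your opening identification is exactly what the paper does: its printed proof consists of citing the estimate as a particular case of \cite[Theorem 6.3]{zbMATH03524039} (Agmon--H\"ormander) and of observing that $(\Delta+\lambda^2)P_\lambda f=f$ follows from the definition for $f\in\mathcal S(\R^n)$, extended to all $f\in B(\R^n)$ by density and the boundedness just established. Your treatment of the identity for Schwartz $f$ is fine, though you omit the density step needed to obtain it for every $f\in B(\R^n)$ as the statement requires. The partial-fraction splitting and the $L^\infty$-multiplier bound for the piece $(\lambda+|\xi|)^{-1}$ are correct and do reduce matters to a uniform $B\to B^*$ bound for $\mathrm{p.v.}\,(\lambda-|\xi|)^{-1}$.

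The difficulty is that your derivation of that uniform bound does not close, and it fails precisely at the step you defer. After polar coordinates, Cauchy--Schwarz on the sphere, and the trace/extension pair from \Cref{lem:B*extension}, what you control is only $\|F\|_{L^\infty(0,\infty)}\lesssim\|f\|_{B(\R^n)}\|g\|_{B(\R^n)}$ for $F(s)=s^{n-1}\langle G_s,g\rangle$. But $\mathrm{p.v.}\int_0^\infty F(s)(\lambda-s)^{-1}\,\dd s$ is not controlled by $\|F\|_{L^\infty}$: the principal value need not even converge for a general bounded $F$ (there is no cancellation at $s=\lambda$), and the tail $s\to\infty$ contributes a logarithmic divergence. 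Invoking the $L^2$-boundedness of the Hilbert transform does not repair this, since $F$ need not be square-integrable on $(0,\infty)$ and, more importantly, an $L^2$-in-$\lambda$ bound cannot yield the pointwise-in-$\lambda$ estimate the lemma asserts. What is actually needed is quantitative regularity of $s\mapsto F(s)$ across $s=\lambda$ --- equivalently, a one-dimensional $B(\R)\to B^*(\R)$ bound for convolution with $\mathrm{p.v.}(1/t)$ applied to the radial profile --- and this is the substance of Agmon--H\"ormander's theorem rather than a routine consequence of the trace inequality. Since the paper handles this point by citation, the efficient fix is to do the same; as written, the core of the estimate remains unproved.
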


\begin{proof}
The inequality is a particular case of \cite[Theorem 6.3]{zbMATH03524039}, while the identity
$(\Delta + \lambda^2) P_\lambda f = f$ in $\R^n$ holds by the definition of $P_\lambda$ for
$f \in \mathcal{S} (\R^n)$. Then, by the density of $\mathcal{S} (\R^n)$ in $B(\R^n)$ and the boundedness of $P_\lambda$, the identity can be extended to $f \in B(\R^n)$.
\end{proof}

\begin{lemma}\label[lemma]{lem:multiplication_V}\sl If $V \in L^\infty (\R^n)$, then we have that
\begin{equation}\label{in:multiplication_V_bilinearform}
\Big| \int_{\R^n} V u v \, \Big| \leq \vvvert V \vvvert \| u \|_{B^\ast (\R^n)} \| v \|_{B^\ast (\R^n)}
\end{equation}
for all $u, v \in B^\ast (\R^n) $. Consequently, if $\vvvert V \vvvert < \infty$, then
\begin{equation}\label{in:boundedness_multiplicationV}
\| V u \|_{B (\R^n)} \leq \vvvert V \vvvert \| u \|_{B^\ast (\R^n)}
\end{equation}
for all $u \in B^\ast (\R^n) $.
\end{lemma}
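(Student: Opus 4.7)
The two inequalities are essentially bookkeeping with dyadic annuli, and both reduce to isolating the geometric factor $2^j\|V\|_{L^\infty(D_j)}$ so that the remaining pieces match the $B^*$-norm pattern $2^{-j/2}\|\cdot\|_{L^2(D_j)}$.

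For \eqref{in:multiplication_V_bilinearform}, the plan is to split the integral as $\int_{\R^n} V u v = \sum_{j\in\N_0} \int_{D_j} V u v$, bound $|V|$ pointwise by $\|V\|_{L^\infty(D_j)}$ on each shell, and then apply Cauchy–Schwarz to the factor $\int_{D_j}|u||v|\le \|u\|_{L^2(D_j)}\|v\|_{L^2(D_j)}$. Each summand then equals
\[
2^j\|V\|_{L^\infty(D_j)}\cdot\bigl(2^{-j/2}\|u\|_{L^2(D_j)}\bigr)\cdot\bigl(2^{-j/2}\|v\|_{L^2(D_j)}\bigr),
\]
and since the last two parenthesized quantities are each bounded by $\|u\|_{B^*(\R^n)}$ and $\|v\|_{B^*(\R^n)}$ respectively (uniformly in $j$), one can pull these suprema out and recognize the remaining dyadic sum as $\vvvert V\vvvert$.

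For \eqref{in:boundedness_multiplicationV}, the cleanest route is a direct computation along the same lines: starting from the definition
\[
\|Vu\|_{B(\R^n)}=\sum_{j\in\N_0}2^{j/2}\|Vu\|_{L^2(D_j)},
\]
one estimates $\|Vu\|_{L^2(D_j)}\le \|V\|_{L^\infty(D_j)}\|u\|_{L^2(D_j)}$, rewrites $2^{j/2}\|V\|_{L^\infty(D_j)}\|u\|_{L^2(D_j)}$ as $(2^j\|V\|_{L^\infty(D_j)})\cdot(2^{-j/2}\|u\|_{L^2(D_j)})$, bounds the second factor by $\|u\|_{B^*(\R^n)}$ uniformly in $j$, and sums. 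Alternatively, since $B^*(\R^n)$ is the dual of $B(\R^n)$, \eqref{in:boundedness_multiplicationV} can also be read off from \eqref{in:multiplication_V_bilinearform} by taking the supremum over $v$ with $\|v\|_{B^*(\R^n)}\le 1$.

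There is no real obstacle here: the only mild point to be careful about is that the $B^*$-norm is defined as a supremum, which allows us to factor $\|u\|_{B^*(\R^n)}$ (and $\|v\|_{B^*(\R^n)}$) out of the dyadic sum without losing an implicit constant, in contrast to what would happen with an $\ell^2$- or $\ell^1$-type summation structure. This is precisely the arithmetic that makes the pairing $\vvvert\cdot\vvvert \times B^*\times B^*$ (respectively $\vvvert\cdot\vvvert\times B^*\to B$) natural.
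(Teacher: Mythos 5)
Your proposal is correct and follows essentially the same route as the paper, which dismisses \eqref{in:multiplication_V_bilinearform} as a direct consequence of Cauchy--Schwarz on each dyadic shell and obtains \eqref{in:boundedness_multiplicationV} from the duality $B^*=(B)^*$; your dyadic bookkeeping simply fills in the details, and your direct summation proof of the second inequality is an equally valid (arguably cleaner) variant of the duality step the paper invokes.
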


\begin{proof}
Inequality \eqref{in:multiplication_V_bilinearform} is a trivial consequence of {the
Cauchy--Schwarz inequality}, while
the second inequality follows from the fact that $B^\ast(\R^n)$ is the dual space of $B(\R^n)$.
\end{proof}

The inequality \eqref{in:boundedness_multiplicationV} shows the boundedness from 
$B^\ast(\R^n)$ to $B(\R^n)$ of the operator
\[ u \in B^\ast(\R^n) \mapsto Vu \in B(\R^n). \]
Making an abuse of notation, $V$ denotes--- from now on---this operator as well as the 
potential. The context clarifies when we mean one or the other.

\begin{corollary}\label[corollary]{cor:I-Pinverse}\sl Let $V $ be in $L^\infty (\R^n)$ so that
$\vvvert V \vvvert < \infty$.
Using $C_n$ to denote the constant of \Cref{lem:p.v.},
we set $ \lambda_V = C_n \vvvert V \vvvert$.
Then, for every $\lambda > \lambda_V $,
the operator $\Id - P_\lambda \circ V$ has a bounded inverse in 
$B^\ast (\R^n)$.
Moreover, if this inverse is denoted by
$(\Id - P_\lambda \circ V)^{-1}$ we have that, for every $\delta > 0$, 
there exists an absolute constant $C > 0$ such that
\[ \| (\Id - P_\lambda \circ V)^{-1} \|_{\mathcal{L} (B^\ast(\R^n))} < C\]
for all $\lambda \geq \lambda_V + \delta$.
\end{corollary}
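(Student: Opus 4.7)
The plan is a standard Neumann series argument. First I would combine the two key estimates already established. From \Cref{lem:multiplication_V}, the multiplication operator $V:B^\ast(\R^n)\to B(\R^n)$ has operator norm at most $\vvvert V\vvvert$, while from \Cref{lem:p.v.} the operator $P_\lambda:B(\R^n)\to B^\ast(\R^n)$ has norm at most $C_n/\lambda$, where $C_n$ is the constant appearing in that lemma. Composing, the operator
\[
P_\lambda\circ V:B^\ast(\R^n)\longrightarrow B^\ast(\R^n)
\]
is bounded with
\[
\|P_\lambda\circ V\|_{\mathcal L(B^\ast(\R^n))}\leq \frac{C_n\vvvert V\vvvert}{\lambda}=\frac{\lambda_V}{\lambda}.
\]

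Now, whenever $\lambda>\lambda_V$, the above norm is strictly less than one, so the Neumann series
\[
\sum_{k=0}^{\infty}(P_\lambda\circ V)^k
\]
converges in $\mathcal L(B^\ast(\R^n))$ and gives a bounded inverse of $\mathrm{Id}-P_\lambda\circ V$. Summing the geometric series yields
\[
\|(\mathrm{Id}-P_\lambda\circ V)^{-1}\|_{\mathcal L(B^\ast(\R^n))}\leq \frac{1}{1-\lambda_V/\lambda}=\frac{\lambda}{\lambda-\lambda_V}.
\]

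Finally, to get the uniform bound claimed for $\lambda\geq\lambda_V+\delta$, observe that the right-hand side above is a decreasing function of $\lambda$ on $(\lambda_V,\infty)$, hence
\[
\frac{\lambda}{\lambda-\lambda_V}\leq \frac{\lambda_V+\delta}{\delta}=1+\frac{\lambda_V}{\delta}\eqqcolon C,
\]
which depends only on $n$, $\vvvert V\vvvert$, and $\delta$, but not on $\lambda$. There is no real obstacle in this argument; the only care to be taken is in checking that the composition of the two operators between the non-symmetric spaces $B$ and $B^\ast$ lands back in $B^\ast$, so that the Neumann series makes sense as an operator on a single Banach space. Once this is noted, the result is an immediate consequence of \Cref{lem:p.v.} and \Cref{lem:multiplication_V}.
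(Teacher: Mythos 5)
Your argument is correct and is essentially the paper's own proof: compose the bounds of \Cref{lem:p.v.} and \Cref{lem:multiplication_V} to get $\|P_\lambda\circ V\|_{\mathcal L(B^\ast(\R^n))}\leq\lambda_V/\lambda<1$, invert by the Neumann series, and sum the geometric series to obtain the uniform bound for $\lambda\geq\lambda_V+\delta$. Your explicit constant $1+\lambda_V/\delta$ (depending on $\delta$ and $\vvvert V\vvvert$, not truly ``absolute'') is in fact a more careful rendering of the second assertion than the paper's one-line remark.
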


\begin{proof}
It is well known that if the Neumann series $\sum_{k=0}^\infty (P_\lambda \circ V)^k $ converges in 
$\mathcal{L} (B^\ast(\R^n))$, then $\Id - P_\lambda \circ V$ has a bounded inverse in 
$B^\ast (\R^n)$ and its inverse is given by
\begin{equation}
\label{id:Neumann_series}
(\Id - P_\lambda \circ V)^{-1} = \sum_{k=0}^\infty (P_\lambda \circ V)^k.
\end{equation}
In order to check that the Neumann series converges is enough to {verify}
\[\| P_\lambda \circ V \|_{\mathcal{L} (B^\ast(\R^n))} < 1. \]
Using \Cref{lem:p.v.,lem:multiplication_V}, we have that
\[ \| P_\lambda (Vu) \|_{B^\ast(\R^n)} \leq \frac{C_n}{\lambda} \| Vu \|_{B(\R^n)} \leq \frac{C_n}{\lambda} \vvvert V \vvvert \| u \|_{B^\ast (\R^n)}. \]
Then, for every $\lambda > \lambda_V $ we have that 
$ \| P_\lambda \circ V \|_{\mathcal{L} (B^\ast(\R^n))} < 1 $.
This implies that the Neumann series converges, which proves the first part of the statement.
To check the second part of the statement, one can use identity \eqref{id:Neumann_series}.
\end{proof}

\begin{proposition}\label[proposition]{prop:solutions}\sl Consider $V$  and $\lambda_V$ as in \Cref{cor:I-Pinverse}.
For $\lambda > \lambda_V$ and $f \in C^\infty (\Sph^{n - 1})$, set
\begin{align*}
u &= E_\lambda f,\\
v &= (\Id - P_\lambda \circ V)^{-1} [P_\lambda(Vu)].
\end{align*}
Then, $w = u + v \in B^\ast (\R^n)$ solves the equation
\[ (\Delta + \lambda^2 - V)w = 0 \enspace \textnormal{in} \enspace \R^n, \]
and, for every $\delta > 0$, there exists a constant $C> 0$, that {depends only on $n$}, such that
\[ \| v \|_{B^\ast (\R^n)} \leq \frac{C\lambda_V}{\lambda} \| u \|_{B^\ast (\R^n)} \]
for all $\lambda \geq \lambda_V + \delta$.
\end{proposition}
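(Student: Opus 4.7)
The proof is essentially a mechanical assembly of the ingredients already developed: the mapping properties of $E_\lambda$ on $L^2(\Sph^{n-1}) \to B^\ast(\R^n)$, those of $P_\lambda$ on $B(\R^n) \to B^\ast(\R^n)$, the boundedness of multiplication by $V$ from $B^\ast(\R^n)$ to $B(\R^n)$, and the invertibility of $\Id - P_\lambda \circ V$. My plan is to first verify that every object in the definition of $w$ lives in the correct space, then to derive a Lippmann--Schwinger-type identity for $w$, and finally to combine the three norm bounds.

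First I would check that $v$ is well-defined in $B^\ast(\R^n)$. By \Cref{lem:B*extension}, $u = E_\lambda f \in B^\ast(\R^n)$. Then \Cref{lem:multiplication_V} gives $Vu \in B(\R^n)$, and \Cref{lem:p.v.} yields $P_\lambda(Vu) \in B^\ast(\R^n)$. Since $\lambda > \lambda_V$, \Cref{cor:I-Pinverse} ensures that $(\Id - P_\lambda \circ V)^{-1}$ is a bounded operator on $B^\ast(\R^n)$, so $v \in B^\ast(\R^n)$ and therefore $w = u + v \in B^\ast(\R^n)$.

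Next I would extract the PDE. Applying $\Id - P_\lambda \circ V$ to the definition of $v$ gives
\[ v - P_\lambda(Vv) = P_\lambda(Vu), \]
so by linearity of $P_\lambda$ we obtain $v = P_\lambda(V(u+v)) = P_\lambda(Vw)$. Since $Vw \in B(\R^n)$ by \Cref{lem:multiplication_V}, the identity $(\Delta + \lambda^2)P_\lambda(Vw) = Vw$ from \Cref{lem:p.v.} (extended to $B(\R^n)$) gives $(\Delta + \lambda^2)v = Vw$. Combined with $(\Delta + \lambda^2)u = 0$ from \Cref{lem:B*extension}, we conclude $(\Delta + \lambda^2 - V)w = 0$ in $\R^n$.

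Finally, for the norm estimate I would chain the three inequalities. \Cref{cor:I-Pinverse} provides an absolute constant $C > 0$ with $\| v \|_{B^\ast(\R^n)} \leq C \| P_\lambda (Vu) \|_{B^\ast(\R^n)}$ for all $\lambda \geq \lambda_V + \delta$; \Cref{lem:p.v.} gives $\| P_\lambda(Vu) \|_{B^\ast(\R^n)} \leq (C_n/\lambda) \| Vu \|_{B(\R^n)}$; and \Cref{lem:multiplication_V} gives $\| Vu \|_{B(\R^n)} \leq \vvvert V \vvvert \, \| u \|_{B^\ast(\R^n)}$. Multiplying these bounds and recalling $\lambda_V = C_n \vvvert V \vvvert$, one arrives at the stated inequality. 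There is no real obstacle here, since the statement is exactly the Neumann-series scattering construction tailored to the spaces $B$ and $B^\ast$; the only point requiring a bit of care is to justify that the identity $(\Delta + \lambda^2)P_\lambda g = g$ is applied to $g = Vw$, which is ensured by the boundedness $V: B^\ast(\R^n) \to B(\R^n)$.
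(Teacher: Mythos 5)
Your proposal is correct and follows essentially the same route as the paper: verify membership in $B^\ast(\R^n)$ via \Cref{lem:B*extension}, \Cref{lem:multiplication_V}, \Cref{lem:p.v.} and \Cref{cor:I-Pinverse}, unwind the resolvent identity into a Lippmann--Schwinger equation to obtain the PDE, and chain the three operator bounds for the norm estimate. The only cosmetic difference is that you rewrite the equation as $v = P_\lambda(Vw)$ before applying $(\Delta+\lambda^2)$, whereas the paper applies it directly to $(\Id - P_\lambda\circ V)v = P_\lambda(Vu)$; both are equivalent.
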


\begin{proof}
On the one hand, it follows from \Cref{lem:B*extension} that 
$E_\lambda f \in B^\ast(\R^n)$ and $(\Delta + \lambda^2) u = 0$ in $\R^n$. On the other hand,
as a consequence of \Cref{lem:p.v.,lem:multiplication_V} we know that
$P_\lambda (Vu) \in B^\ast(\R^n)$ and
\[ \| P_\lambda (Vu) \|_{B^\ast(\R^n)} \leq \frac{C_n}{\lambda} \vvvert V \vvvert \| u \|_{B^\ast (\R^n)}. \]
Then, by \Cref{cor:I-Pinverse}, the fact that $P_\lambda (Vu) \in B^\ast(\R^n)$ and the 
previous inequality we have that $v \in B^\ast(\R^n)$ for $\lambda > \lambda_V$, and
\[ \| v \|_{B^\ast (\R^n)} \leq \frac{C\lambda_V}{\lambda} \| u \|_{B^\ast (\R^n)} \]
for all $\lambda \geq \lambda_V + \delta$. Note that $v$ solves
\[(\Id - P_\lambda \circ V) v = P_\lambda(Vu),\]
and, by the last part of the statement of \Cref{lem:p.v.}, we have that
\[ (\Delta + \lambda^2 - V) v = Vu \enspace \textnormal{in} \enspace \R^n.  \]
Therefore, $w = u + v \in B^\ast (\R^n)$ and
\[ (\Delta + \lambda^2 - V)w = 0 \enspace \textnormal{in} \enspace \R^n. \]
This concludes the proof of this proposition.
\end{proof} 

\section{An orthogonality relation}\label{sec:orthogonality}
In this section we prove the orthogonality identity that yields 
\eqref{id:orthogonality_relation}. For this we will need a couple of lemmas{:
the first is an integration-by-parts formula and the second establishes an approximation of the identity in $B(\R^n)$.}

\begin{proposition}\label[proposition]{prop:orthogonality_time-independent} \sl 
Write $\Sigma = (0,T) \times \R^n$. Consider $V_1$ and $V_2$ in $L^\infty (\R^n)$ such that 
$\vvvert V_j \vvvert < \infty $ for $j \in \{1, 2 \}$ and let $\mathcal{U}_T^1 $ and $ \mathcal{U}_T^2$ denote their
corresponding initial-to-final-state maps. Then, if $\mathcal{U}_T^1 = \mathcal{U}_T^2$,
we have that
\[ \int_\Sigma (V_1 - V_2)u_1 \overline{v_2}\, = 0 \]
for all $u_1, v_2 \in C(\R; B^\ast (\R^n))$
solution{s} of the equations
\[ (i\partial_\tm + \Delta - V_1) u_1 =  (i\partial_\tm + \Delta - \overline{V_2}) v_2 = 0 \enspace \textnormal{in} \enspace \R \times \R^n. \]
\end{proposition}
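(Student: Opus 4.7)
The proof will follow the standard integration-by-parts scheme: derive the orthogonality identity first for physical ($L^2$) solutions and then extend to the class $C(\R; B^\ast(\R^n))$ by an approximation argument tailored to the $B \times B^\ast$ duality.

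For the $L^2$ core, assume temporarily that $u_1, v_2 \in C([0,T]; L^2(\R^n))$ are physical solutions and let $u_2 \in C([0,T]; L^2(\R^n))$ be the physical solution of $(i\partial_\tm + \Delta - V_2)u_2 = 0$ with matching initial datum $u_2(0) = u_1(0)$. Differentiating
\[\Phi(\tm) \coloneqq \int_{\R^n}(u_1(\tm) - u_2(\tm))\overline{v_2(\tm)}\,\dd x\]
and substituting the three equations, the Laplacian terms cancel after integration by parts in $x$ (first justified for $H^2$-regular approximations of the initial data and then extended by density), while the potential contributions simplify via the algebraic rearrangement $V_1 u_1 \overline{v_2} - V_2 u_2 \overline{v_2} - V_2(u_1 - u_2)\overline{v_2} = (V_1 - V_2) u_1 \overline{v_2}$, yielding $\Phi'(\tm) = -i\int_{\R^n}(V_1 - V_2) u_1 \overline{v_2}\,\dd x$. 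Integrating from $0$ to $T$ forces the left-hand side to vanish: $\Phi(0) = 0$ by construction and $\Phi(T) = 0$ because the hypothesis $\mathcal{U}_T^1 = \mathcal{U}_T^2$ gives $u_2(T) = u_1(T)$. This is the content of the integration-by-parts lemma announced in the text.

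To remove the $L^2$ restriction, I would use the approximation-of-identity lemma in $B(\R^n)$ to regularise the initial data, producing sequences $u_1^{(n)}(0), v_2^{(n)}(0) \in B \subset L^2$ that approximate $u_1(0), v_2(0) \in B^\ast$ in the sense dual to $B$. Propagating by the Schr\"odinger flow yields physical approximating solutions in $C([0,T]; L^2)$ to which the $L^2$-level identity just derived applies. The limit is controlled by the bilinear bound
\[\Big|\int_{\R^n}(V_1 - V_2) u \overline{v}\,\dd x\Big| \leq \vvvert V_1 - V_2\vvvert\,\|u\|_{B^\ast}\|v\|_{B^\ast}\]
from \Cref{lem:multiplication_V}: approximating one argument at a time, each step pairs a weak-$\ast$-converging factor against a fixed element of $B$ of the form $(V_1 - V_2)\overline{v}$ or $(V_1 - V_2) u$, which is enough to pass to the limit.

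The main obstacle is the approximation step. Since $L^2$ is not norm-dense in $B^\ast$, strong $B^\ast$-convergence of the physical approximants to an arbitrary $B^\ast$-valued solution is out of reach, and the whole argument must be set up in the weaker topology dual to $B$. Moreover, one must verify that the Schr\"odinger flow for $-\Delta + V_j$ preserves this weaker convergence uniformly in $\tm \in [0,T]$; the approximation-of-identity lemma in $B(\R^n)$ foreshadowed in the text is presumably designed to encode exactly this compatibility.
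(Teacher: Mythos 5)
Your $L^2$ core is sound and is essentially the paper's first step in disguise: your difference $u_1-u_2$ is exactly the auxiliary function $w_2$ solving $(i\partial_\tm+\Delta-V_2)w_2=(V_1-V_2)u_1$ with $w_2(0,\centerdot)=0$, and the hypothesis $\mathcal U_T^1=\mathcal U_T^2$ is what forces $w_2(T,\centerdot)=0$. (Even there, note that the paper's Step 1 already allows $v_2\in C(\R;B^\ast(\R^n))$, pairing the $L^2$-valued $w_2$ against the $B^\ast$-valued $v_2$ via the integration-by-parts formula of \Cref{lem:integration-by-parts}; you instead restrict both factors to $L^2$, which doubles the burden on your extension step.)

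The extension step is where the argument breaks down, and you have correctly identified the location of the obstacle but not a way past it. The solutions you ultimately need to insert are the stationary states $e^{-i\lambda^2\tm}w(x)$ with $w$ a perturbed Herglotz wave; these lie in $C(\R;B^\ast(\R^n))$ but genuinely not in $L^2(\R^n)$, and there is no well-posedness theory for the initial-value problem with data in $B^\ast(\R^n)$. Consequently, even if you mollify $u_1(0,\centerdot)$ into $L^2$ and propagate by the physical flow, nothing identifies the limit of those physical solutions with the given $u_1$: you would need uniqueness and weak-$\ast$ continuous dependence in $C(\R;B^\ast(\R^n))$, neither of which is available, and \Cref{lem:appidB} does not supply it --- that lemma only gives norm convergence of spatial mollification in $L^1((0,T);B(\R^n))$ and is used inside the proof of \Cref{lem:integration-by-parts}, not to propagate convergence under the flow. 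The paper sidesteps approximation of the solutions entirely by a transposition argument: in its second step it solves a \emph{backward inhomogeneous} problem $(i\partial_\tm+\Delta-\overline{V_1})w_1=(\overline{V_1}-\overline{V_2})v_2$, $w_1(T,\centerdot)=0$, whose source lies in $L^1((0,T);L^2(\R^n))$ because multiplication by a potential with $\vvvert V\vvvert<\infty$ maps $B^\ast(\R^n)$ into $B(\R^n)\subset L^2(\R^n)$; the Step-1 identity (valid for \emph{all} physical $u_1$) then forces $w_1(0,\centerdot)=0$, and a final application of \Cref{lem:integration-by-parts} against the arbitrary $B^\ast$-valued $u_1$ closes the argument. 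In other words, the only objects ever produced by the $L^2$ theory are auxiliary solutions with $L^2$-valued sources and vanishing endpoint data, while the $B^\ast$-valued solutions appear only as test functions in a duality pairing that is justified directly. To repair your proof you would need to replace your approximation scheme by this duality step (or prove an ad hoc weak-$\ast$ well-posedness theory in $B^\ast$, which is a substantially harder task than the proposition itself).
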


In order to prove this orthogonality relation, we will need an integration-by-parts 
formula.

\begin{lemma}\label[lemma]{lem:integration-by-parts}\sl
For every $u\in C([0, T]; L^2(\R^n)) $ such that 
$(i\partial_\tm + \Delta) u $ belongs to $ L^1((0,T);B (\R^n)) $ and
$u(0,\centerdot) = u(T, \centerdot) = 0$,
and every $v \in C([0 , T]; B^\ast (\R^n))$ such that
$(i\partial_\tm + \Delta) v $ in $ L^1((0, T); B(\R^n))$, we have
\[\int_\Sigma (i\partial_\tm + \Delta) u \overline{v} = \int_\Sigma u \overline{(i\partial_\tm + \Delta)v} \,. \]
\end{lemma}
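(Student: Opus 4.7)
The plan is to prove the identity by reducing to the case of smooth functions, where integration by parts is classical, and then passing to the limit by a density argument. Both sides of the identity are well-defined as $B$--$B^\ast$ dualities: on the left, $(i\partial_\tm + \Delta)u \in L^1((0,T); B(\R^n))$ pairs with $v \in C([0,T]; B^\ast(\R^n))$; on the right, $u \in C([0,T]; L^2(\R^n)) \hookrightarrow C([0,T]; B^\ast(\R^n))$ pairs with $(i\partial_\tm + \Delta)v \in L^1((0,T); B(\R^n))$. The key asymmetry to exploit is the vanishing of $u$ at $\tm = 0, T$.

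I would first handle the smooth case. If $u, v$ are $C^2$ in $(\tm,\x)$ with sufficient spatial decay and $u(0,\centerdot) = u(T,\centerdot) = 0$, the integrand can be rewritten as
\[ (i\partial_\tm + \Delta)u \cdot \overline{v} - u \cdot \overline{(i\partial_\tm + \Delta)v} = i\partial_\tm(u\overline{v}) + \nabla \cdot \bigl[(\nabla u)\overline{v} - u\overline{\nabla v}\bigr]. \]
Integration over $\Sigma$ then yields two boundary contributions: the temporal one, $i\int_{\R^n}[u\overline{v}]_{\tm=0}^{\tm=T}\,\dd\x$, vanishes thanks to the assumption on $u$, while the spatial one, a surface integral at infinity, vanishes by decay.

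The core of the argument is then to approximate general $u, v$ by smooth functions with convergence in the right topologies. I would construct $u_n, v_n$ smooth in $(\tm,\x)$ via convolution in time with a standard mollifier and in space with a mollifier $\phi_\eta$, extending $u$ by zero outside $[0, T]$ (which produces a continuous extension precisely because $u(0,\centerdot) = u(T,\centerdot) = 0$) and extending $v$ by continuity (for instance by constant prolongation from the endpoint values). A smooth cutoff in $\tm$ supported strictly inside $(0, T)$ would enforce $u_n(0,\centerdot) = u_n(T,\centerdot) = 0$. The required convergences are $u_n \to u$ in $C([0,T]; B^\ast(\R^n))$ together with $(i\partial_\tm + \Delta)u_n \to (i\partial_\tm + \Delta)u$ in $L^1((0,T); B(\R^n))$, and the analogous convergences for $v_n$. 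These should follow from standard properties of convolution, combined with the approximation of identity in $B(\R^n)$ provided by the companion lemma in this section.

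Finally, applying the smooth-case identity to $u_n, v_n$ and letting $n \to \infty$, the two sides converge by the boundedness of the $B$--$B^\ast$ pairing, since differences like $\int_\Sigma (i\partial_\tm + \Delta)(u_n - u)\,\overline{v_n}$ are controlled by $\|(i\partial_\tm + \Delta)(u_n - u)\|_{L^1_\tm B}\,\|v_n\|_{L^\infty_\tm B^\ast}$, and likewise on the other side. The main technical obstacle is the construction of the approximations so that the convergence holds simultaneously in the functional norms and for the images under $(i\partial_\tm + \Delta)$, while correctly handling the asymmetric treatment of $u$ (vanishing at the endpoints) and $v$ (with generically nonzero boundary values), which dictates the different choices of extension and forces careful estimates of the error terms introduced by the cutoff in $\tm$.
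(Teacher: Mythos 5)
Your plan — approximate both $u$ and $v$ by space–time mollification, reduce to a classically smooth integration by parts, pass to the limit — is genuinely different from the paper's route, which approximates \emph{only} $v$ (spatial truncation plus spatial mollification, no time smoothing), leaves $u$ untouched, and then invokes an already-established integration-by-parts identity in the $C([0,T];L^2)\times C([0,T];L^2)$ setting from the cited reference of Caro and Ruiz. That asymmetry is not cosmetic, and it exposes the main gap in your proposal.

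The smooth-case identity you invoke needs the spatial boundary contribution (the divergence term) to vanish, which you correctly note requires decay at infinity. But your construction of $u_n, v_n$ — convolution with compactly supported mollifiers in $(t,x)$ plus a cutoff in $t$ — does \emph{not} produce functions with any spatial decay: mollifying an $L^2$ or $B^*$ function does not make it decay, so your $u_n\overline{v_n}$, $\nabla u_n\,\overline{v_n}$, etc.\ are not integrable on $\R^n$ and the surface term at infinity is not even defined, let alone negligible. The obvious remedy, a spatial cutoff $\chi^R$, is exactly what the paper uses on $v$ — but it is \emph{not available for $u$}: $(i\partial_\tm+\Delta)(\chi^R u)$ generates the cross-term $2\nabla\chi^R\cdot\nabla u$, and the hypotheses give no control on $\nabla u$ separately (only on the combination $(i\partial_\tm+\Delta)u$). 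The paper sidesteps this by never touching $u$; $v$ can be truncated only after spatial mollification (so that $\nabla v_\ve = v*\nabla\varphi_\ve$ makes sense), at the price of an $\ve^{-1}$ in the error, which forces the careful coupling $R=\ve^{-3}$ that occupies most of the paper's proof. Your proposal does not engage with this obstruction at all.

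Two further problems with the time-direction smoothing as you have described it. For $v$, the constant-in-time prolongation is continuous but $(i\partial_\tm+\Delta)v_{\mathrm{ext}}$ equals $\Delta v(0,\centerdot)$ on $t<0$, a distribution that is not in $B(\R^n)$; so $(i\partial_\tm+\Delta)(v_{\mathrm{ext}}*\rho_\delta)$ is not a $B$-valued function near $t=0,T$, and the claimed convergence $(i\partial_\tm+\Delta)v_n\to(i\partial_\tm+\Delta)v$ in $L^1((0,T);B)$ fails near the endpoints. For $u$, the smooth temporal cutoff $\eta(t)$ contributes the extra term $i\eta'u\,\overline{v}$, which is a pairing of an $L^2$-valued function against a $B^*$-valued one — not a valid $B$–$B^*$ duality — unless $v$ has already been spatially truncated, again tying you back to the very cutoff scheme you omitted. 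In short, the idea of reducing to a classical smooth case is natural, but as written the construction does not produce approximants for which the smooth identity holds, and repairing it would essentially force you to reproduce the asymmetric $R,\ve$ argument of the paper.
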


\begin{proof}[Proof of \Cref{prop:orthogonality_time-independent}]
We will prove it in two steps. The first one consists in
proving that the equality $\mathcal{U}_T^1 = \mathcal{U}_T^2$ implies that
\begin{equation}\label{id:orthogonality_physical-TH}
\int_\Sigma (V_1 - V_2)u_1 \overline{v_2}\, = 0 
\end{equation}
for every $u_1 \in C([0, T]; L^2(\R^n))$ solution of 
$(i\partial_\tm + \Delta - V_1) u_1 = 0$ in $\Sigma$, and every
$v_2 \in C(\R; B^\ast (\R^n))$ solution of 
$(i\partial_\tm + \Delta - \overline{V_2}) v_2 = 0 $ in $\R \times \R^n$.

For every solution $u_1 \in C([0, T]; L^2(\R^n))$,
let $w_2 \in C([0, T]; L^2(\R^n))$ be the solution
of the problem 
\[\left\{
		\begin{aligned}
		& (i\partial_\tm + \Delta -  V_2)w_2 = (V_1 - V_2)u_1 & & \textnormal{in} \, \Sigma, \\
		& w_2(0, \centerdot) = 0 &  & \textnormal{in} \, \R^n.
		\end{aligned}
\right.\]
Since $\mathcal{U}_T^1 = \mathcal{U}_T^2$ and $V_1, V_2 \in L^\infty (\R^n)$, we can
apply \cite[Lemma 4.4]{zbMATH07801151} with $F = (V_1 - V_2)u_1$ together with 
\cite[Proposition 4.1]{zbMATH07801151} to conclude that $w_2(T, \centerdot) = 0$.
By \Cref{lem:multiplication_V}, we know that 
$(V_1 - V_2) \overline{v_2} \in C(\R; B (\R^n))$. Since $B (\R^n) \subset L^2(\R^n)$, we 
know that $(V_1 - V_2)u_1 \overline{v_2}|_{\Sigma} \in L^1(\Sigma)$ and we can write
\[ \int_\Sigma (V_1 - V_2)u_1 \overline{v_2}\, = \int_\Sigma (i\partial_\tm + \Delta - V_2)w_2 \overline{v_2}\, . \]
Using the integration-by-parts formula of \Cref{lem:integration-by-parts} for $w_2$ and $v_2$ we have that
\[ \int_\Sigma (V_1 - V_2)u_1 \overline{v_2}\, = \int_\Sigma w_2 \overline{(i\partial_\tm + \Delta - \overline{V_2}) v_2}\, = 0. \]
In the last {equality} we have used the fact that $(i\partial_\tm + \Delta - \overline{V_2}) v_2=0$.
This {completes} the first
step of this proof.

In the second step, we show that the orthogonality relation 
\eqref{id:orthogonality_physical-TH} implies the one in the statement.
For $v_2 \in C(\R; B^\ast (\R^n))$ we know, by \Cref{lem:multiplication_V}, that 
$(V_1 - V_2) \overline{v_2} \in C(\R; B (\R^n))$. This means that
$(\overline{V_1} - \overline{V_2}) v_2|_\Sigma \in L^1((0, T); L^2(\R^n))$. Then,
let $w_1 \in C([0, T]; L^2(\R^n))$ be the solution of the problem
\[\left\{
		\begin{aligned}
		& (i\partial_\tm + \Delta - \overline{V_1})w_1 = (\overline{V_1} - \overline{V_2}) v_2|_\Sigma & & \textnormal{in} \, \Sigma, \\
		& w_1(T, \centerdot) = 0 &  & \textnormal{in} \, \R^n.
		\end{aligned}
\right.\]
We can apply \cite[Lemma 4.5]{zbMATH07801151} with 
$G = (\overline{V_1} - \overline{V_2}) v_2|_\Sigma $ together with 
the orthogonality relation \eqref{id:orthogonality_physical-TH} to conclude that
$w_1(0, \centerdot) = 0$.
Since $(V_1 - V_2)u_1|_{\Sigma} \overline{v_2}|_{\Sigma} \in L^1(\Sigma)$, we can write
\[ \int_\Sigma (V_1 - V_2)u_1 \overline{v_2}\, = \int_\Sigma u_1 \overline{(i\partial_\tm + \Delta - \overline{V_1}) w_1} \, = \int_\Sigma (i\partial_\tm + \Delta - V_1) u_1 \overline{w_1} \, = 0. \]
the integration-by-parts formula of \Cref{lem:integration-by-parts} for $w_1$ and 
$u_1$, and the fact that $u_1$ is solution. This {concludes} the second
step of the proof.
\end{proof}

\begin{proof}[Proof of \Cref{lem:integration-by-parts}]
Let us fix $u,v$ as in the statement of the lemma and $\chi$ a smooth bump function
in $\R^n$ such that $0\leq \chi \leq 1$ with $\chi\equiv 1$ on $\{x\in\R^n:\, |x|\leq 1\}$ and
$\supp \chi \subset \{x\in\R^n:\, |x|\leq 2\}$. Consider also a bump function
$\varphi\in \mathcal S(\R^n)$ with $\supp \varphi\subset \{x\in\R^n:\, |x|\leq 1\}$ and
$\int \varphi =1$. For $R\gg1$ and $0<\varepsilon\ll1$, and bump functions $\chi,\varphi$ as above, we define $\chi^R(x)\coloneqq \chi(x/R)$  and $\varphi_\varepsilon(x)\coloneqq \varepsilon^{-n}\varphi(x/\varepsilon)$, {and the approximation $v_{R,\ve}$ of $v$}
\[
v_{R,\varepsilon}(t,\centerdot)\coloneqq \chi^R(x)(v(t,\centerdot)*\varphi_\varepsilon)\eqqcolon \chi^R(x)v_\ve(t,x),\qquad (t,x)\in\Sigma;
\]
note that the convolution above is considered only in the spatial variables.  The Leibniz rule  yields
\[
\begin{split}
(i\partial_\tm +\Delta) v_{R,\ve} &= (i\partial_\tm +\Delta) ((v*\varphi_\ve) \chi_R) 
\\
&= \chi^R ((i\partial_\tm +\Delta)(v*\varphi_\ve)) + (v*\varphi_{\ve}) \Delta(\chi^R)+2\nabla (v*\varphi_\ve) \nabla\chi^R,
\end{split}
\]
the equality holding in $\mathcal D'(\Sigma)$. We record some easy estimates for the derivatives of the auxiliary functions $\chi^R,\varphi_\ve$: we have that
\[
\Delta(\chi^R)=R^{-2}(\Delta \chi)^R \ind_{\{R\leq |x|\leq 2R\}},\quad \nabla(\chi^R)=R^{-1}(\nabla \chi)^R \ind_{\{R\leq |x|\leq 2R\}} ,\quad \nabla(\varphi_\ve)=\ve^{-1}(\nabla\varphi)_\ve.
\]
Note that the functions $(\nabla\chi)^R,(\Delta \chi)^R$ are both smooth bump functions
which are supported in the annulus $\{x\in\R^n:\, R\leq|x|\leq 2R\}$ 
and have $L^\infty$-norms of the order $O_\chi(1)$, while $(\nabla\varphi)_\ve$ is a smooth bump function which is supported in $\{x\in\R^n:\, |x|\leq \ve\}$ and has $L^1(\R^n)$-norm of the order $O_\varphi(1)$.

We will need to estimate various norms of $v_{R,\ve}$ and $(i\partial_\tm+\Delta) v_{R,\ve}$ and the most flexible calculation will be to estimate the corresponding $L^2(D_j)$-norms for each $j\in \N_0$. To that end we fix some $t\in [0,T]$ and calculate
\[
\|v_{R,\ve}(t,\centerdot)\|_{L^2(D_j)}\leq \ind_{\{j:\,2^j\lesssim R\}}2^{j/2} \|v(t,\centerdot)\|_{B^*(\R^n)}.
\]
We continue with the estimates from the different summands produced by the Leibniz rule; we have
\[
\begin{split}
\|\chi^R (i\partial_\tm +\Delta) (v(t,\centerdot)*\varphi_\ve)\|_{L^2(D_j)}&\leq \| \varphi_\ve*(i\partial t+\Delta)v\|_{L^2(D_j)}\ind_{\{j:\,2^j\lesssim R\}}
\\
&\lesssim  \ind_{\{j:\,2^j\lesssim R\}} \int_{\R^n} |\varphi_\ve(y)| \left(\int_{D_j+y}|(i\partial_\tm+\Delta)v(t,\centerdot) |^2 \,\dd x \right)^{\frac12}  \dd y 
\\
&\lesssim \ind_{\{j:\,2^j\lesssim R\}} \|(i\partial_\tm+\Delta)v(t,\centerdot)\|_{L^2(D_{j-1}\cup D_j \cup D_{j+1})},
\end{split}
\]
where the last approximate inequality follows since $|y|\leq \ve$ for $y\in\supp \varphi_\ve$
and $0<\ve \ll 1\leq 2^j$ for $j\in\N_0$. For convenience, we consider here $D_{-1}\coloneqq \emptyset$. For the second summand produced by the Leibniz rule we can estimate
\[
\begin{split}
 \left\|(v(t,\centerdot)*\varphi_\ve) \Delta(\chi^R)\right\|_{L^2(D_j)}\lesssim \ind_{\{j:\,2^j\simeq  R\}}R^{-2}  \|v(t,\centerdot)\|_{L^2(\{R\leq|x|\leq 2R\})}\lesssim \ind_{\{j:\,2^j\simeq  R\}} R^{-\frac32}\|v(t,\centerdot)\|_{B^*(\R^n)}.
\end{split}
\]
Finally, for the third summand resulting from the Leibniz rule we write
\[
\begin{split}
\left\| \nabla (\varphi_\ve*v(t,\centerdot)) \nabla\chi^R\right\|_{L^2(D_j)} &\lesssim \ind_{\{j:\,2^j\simeq  R\}}R^{-1}\ve^{-1} \left\|(\nabla\varphi)_\ve*v(t,\centerdot)\right\|_{L^2(\{R\leq |x|\leq 2R\})}
\\
& \lesssim \ind_{\{j:\,2^j\simeq  R\}}R^{-\frac12}\ve^{-1}   \|v(t,\centerdot)\|_{B^*(\R^n)}.
\end{split}
\]
Using the estimates above we first estimate the $C([0,T];L^2(\R^n))$-norm of $v_{R,\ve}$ as follows
\[
\sup_{t\in[0,T]}\|v_{R,\ve}(t,\centerdot)\|_{L^2(\R^n)}\lesssim R^{\frac12} \|v(t,\centerdot)\|_{B^*(\R^n)  }.
\]
Next, we bound the $L^1((0,T);L^2(\R^n))$-norm of $(i\partial_\tm+\Delta)v_{R,\ve}$; combining the estimates above with the Leibniz rule yields
\[
\int_0 ^T \|(i\partial_\tm+\Delta)v_{R,\ve} \|_{L^2(\R^n)}\dd t \lesssim \int_0 ^T \|(i\partial_\tm+\Delta)v  \|_{B(\R^n)}\dd t + T(R^{-\frac32}+R^{-\frac12}\ve^{-1})\sup_{t\in[0,T]}\|v(t,\centerdot)\|_{B^*(\R^n)}.
\]
With these estimates in hand and using the assumptions of the lemma we have for each $R,\ve>0$ that $u,v_{R,\ve}\in C([0,T];L^2(\R^n))$ with $(i\partial_\tm+\Delta)u,(i\partial_\tm+\Delta){v_{R,\ve}}\in   L^1((0,T);L^2(\R^n))$. We can therefore appeal to \cite[Proposition 4.2]{zbMATH07801151} together with the assumption $u(0,\centerdot)=u(T,\centerdot)=0$ to get that
\[
\int_{\Sigma} (i\partial_\tm+\Delta)u \overline {v_{R,\ve}}=\int_{\Sigma}u \overline{(i\partial_\tm+\Delta)v_{R,\ve}}.
\]
Hereinafter we choose $R=R(\ve)\coloneqq \ve^{-3}$. In order to complete the proof of the lemma it will be enough to show that
\begin{align}
&\label{eq:rhs}\lim_{ \ve\to 0} \int_\Sigma \left[ u \overline{(i\partial_\tm+\Delta)(v_{R(\ve),\ve}-v)}\right]=0,
\\
&\label{eq:lhs}\lim_{ \ve\to 0} \int_\Sigma \left[ [(i\partial_\tm+\Delta)u] \overline {({v_{R(\ve),\ve}}-v)}\right]=0.
\end{align}
We begin with the proof of \eqref{eq:rhs}. Since $u\in C([0,T];L^2(\R^n))$ and $(i\partial_\tm+\Delta)v \in L^1((0,T);B(\R^n))\subset L^1((0,T);L^2(\R^n))$ and $\lim_{R\to\infty} (1-\chi^R)=0$, the dominated convergence theorem readily implies that
\[
\lim_{\ve\to 0}\left|\int_{\Sigma}u \overline{\left[(i\partial_\tm+\Delta)v -\chi^{R(\ve)}(i\partial_\tm+\Delta)v\right]}\right|=0.
\]
Next, we estimate the difference $\chi^{R(\ve)}\left[(i\partial_\tm+\Delta)v-(i\partial_\tm+\Delta)v_\ve\right]$; there holds
\[
\begin{split}
&\left|\int_{\Sigma}u \chi^{R(\ve)}\overline{\left[(i\partial_\tm+\Delta)v -(i\partial_\tm+\Delta)v_\ve\right]}\right|
\\
&\qquad\qquad \leq \sup_{t\in[0,T]}\|u(t,\centerdot)\|_{L^2(\R^n)}\int_0 ^T \left\| (i\partial_\tm+\Delta)v- [(i\partial_\tm+\Delta)v]*\varphi_\ve\right\|_{L^2(\R^n)}\dd t
\end{split}
\]
which converges to $0$ as $\ve\to 0$ since $(i\partial_\tm +\Delta)v\in L^1((0,T);B)\subset  L^1((0,T);L^2(\R^n))$ and $\varphi_\ve$ is an approximate identity. Thus, in order to prove \eqref{eq:rhs} it will suffice to control the term
\[
\chi^{R(\ve)}(i\partial_\tm+\Delta)v_\ve -v_{R(\ve),\ve}=v_\ve \Delta(\chi^{R(\ve)})+2\nabla(v_\ve)\nabla(\chi^{R(\ve)}),
\]
the equality in the display above following by the Leibniz rule. Taking into account the support of the bump function $\Delta(\chi^R)$, we estimate the first summand above as 
\[
\begin{split}
\left| \int_\Sigma u v_\ve \Delta(\chi^{R(\ve)})\right|&\lesssim T R(\ve)^{-2}\sup_{t\in[0,T]}\left(\|u(t,\centerdot)\|_{L^2(\R^n)} \left\| \varphi_\ve*v(t,\centerdot) \right\|_{L^2(R\leq |x|\leq 2R)}\right)
\\
&\lesssim TR(\ve)^{-3/2}\sup_{t\in[0,T]}\|u(t,\centerdot)\|_{L^2(\R^n)} \sup_{t\in[0,T]}\|v(t,\centerdot)\|_{B^*(\R^n)}
\end{split}
\]
which also tends to $0$ as $\ve\to 0$ since $u\in C([0,T];L^2(\R^n))$ and $v\in C([0,T];B^*(\R^n))$. Similarly, we have
\[
\begin{split}
& \left| \int_\Sigma u \nabla (v_\ve) \nabla (\chi^{R(\ve)})\right| \lesssim \frac{T}{\ve R(\ve)} \sup_{t\in[0,T]} \left(\|u(t,\centerdot)\|_{L^2(\R^n)} \left\| (\nabla \varphi)_\ve *v(t,\centerdot) \right\|_{L^2(\{R\leq |x|\leq 2R\})}\right)
\\
\quad&\lesssim\frac{T}{\ve R(\ve)}\sup_{t\in[0,T]} \|u(t,\centerdot)\|_{L^2(\R^n)}\|(\nabla\varphi)_{\ve}(y)*[v(t,\centerdot)\mathbf{1}_{\{|x|\simeq R\}}] \|_{L^2(\R^n)}
\\
\quad &\lesssim T R(\ve)^{-\frac12}\ve^{-1}\sup_{t\in[0,T]}\|u(t,\centerdot)\|_{L^2(\R^n)}\sup_{t\in[0,T]}\|v(t,\centerdot)\|_{B^*(\R^n)},
\end{split}
\]
where we used that $|y|\leq \ve\ll 1\ll R$ for $y\in\mathrm{supp}(\nabla\varphi)_\ve$ and thus, for $R\leq |x|\leq 2R$ we have $(\nabla \varphi)_\ve * v(t,\centerdot)\simeq (\nabla \varphi)_\ve * [v(t,\centerdot)\mathbf{1}_{\{|x|\simeq R\}}]$; the right hand side above tends to $0$ as $\ve\to 0$ since $R(\ve)^{-\frac12}\ve^{-1}=\ve^{\frac12}$ and this completes the proof of \eqref{eq:rhs}.

It remains to prove \eqref{eq:lhs}. Firstly, let us remark that for any $R,\ve>0$ we have
\[
\begin{split}
 &\int_0 ^T \int_{\R^n}\int_{\R^n} \left|[(i\partial_\tm +\Delta)u(t,x)] \chi^R(x) \varphi_\ve(x-y)v(t,y)\right|\,\dd y\,\dd x \,\dd t 
 \\
 &\qquad\qquad\qquad \lesssim \ve^{-1} \left(\int_{0} ^T \left\| (i\partial_\tm+\Delta)u\right\|_{B(\R^n)} \dd t \right) \sup_{t\in[0,T]}\|v(t,\centerdot)\|_{B^*(\R^n)}<\infty
\end{split}
\]
by the assumptions of the lemma. We can then use Fubini's theorem to justify the change of the order of integration below
\[
\int_\Sigma [(i\partial_\tm +\Delta) u] \overline{v_{R,\ve}}=\int_\Sigma \chi^R \left[\varphi_\ve*(i\partial_\tm+\Delta)u\right] \overline{v},
\]
where the convolution is again considered in the spatial variables only. Now we have 
\[
\begin{split}
&\left|\int_{\Sigma}\left[[(i\partial_\tm+\Delta)u] \overline{v_{R,\ve}}-[(i\partial_\tm+\Delta)u] \overline{v} \right]\right|=\left|\int_{\Sigma}\left[\chi^R\left[\varphi_\ve*\left((i\partial_\tm+\Delta)u\right)\right] -(i\partial_\tm+\Delta)u \right]\overline{v} \right|
\\
&\leq \left|\int_{\Sigma}(\chi^R-1) (i\partial_\tm+\Delta)u  \overline{v} \right|+\sup_{t\in[0,T]}\|v(t,\centerdot)\|_{B^*(\R^n)} \int_0 ^T \left\| (i\partial_\tm+\Delta)u -\varphi_\ve* \left((i\partial_\tm+\Delta)u\right)\right\|_{B(\R^n)}\dd t.
\end{split}
\]
The first summand in the display above tends to $0$ as $R\to \infty$ by dominated convergence 
since our assumptions imply that $[(i\partial_\tm+\Delta)u] \overline{v}\in L^1(\Sigma)$. The 
second summand in the right hand side of the display above tends to $0$ as $\ve\to 0$ as a 
consequence of \Cref{lem:appidB}---proven below---since 
$(i\partial_\tm+\Delta)u\in L^1((0,T);B(\R^n))$ and $\varphi$ is an $L^1$-rescaled smooth bump 
function. This completes the proof of \eqref{eq:lhs} and with that the proof of the lemma.
\end{proof}

The following lemma shows that approximate identities converge in $L^1((0,T);B(\R^n))$. Its proof is rather elementary but we include it here for the convenience of the reader.

\begin{lemma}\label[lemma]{lem:appidB}\sl For $\varphi\in L^1(\R^n)$ such that $\int_{\R^n}\varphi =1$ and $\int_{\R^n}|y|^{1/2}|\varphi(y)|\,\dd y<+\infty$. For $\ve>0$ define the $L^1$-rescaled bump function $\varphi_\ve(x)\coloneqq \ve^{-n}\varphi(x/\ve)$. For every $U\in L^1((0,T);B(\R^n))$ there holds
\[
\lim_{\ve\to 0} \int_0 ^T \left\| U(t,\centerdot)-\varphi_\ve * U(t,\centerdot) \right\|_{B(\R^n)}\, \dd t=0,
\]
with the convolution above being considered in the spatial variables only.
\end{lemma}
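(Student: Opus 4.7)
The plan is to establish the desired limit pointwise in $t$ as a statement in $B(\R^n)$, and then upgrade to $L^1((0,T);B(\R^n))$ via dominated convergence in $t$. Three ingredients are needed: (i) a uniform bound $\|\varphi_\ve * f\|_{B(\R^n)} \lesssim \|f\|_{B(\R^n)}$ valid for all small $\ve$, (ii) convergence $\|\varphi_\ve * h - h\|_{B(\R^n)} \to 0$ for $h$ in a dense subclass, and (iii) density of that subclass in $B(\R^n)$.

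The analytic core is a translation estimate $\|T_y f\|_{B(\R^n)} \leq C(1 + |y|^{1/2})\|f\|_{B(\R^n)}$, where $T_y f(x) \coloneqq f(x-y)$. To prove it I would expand $\|T_y f\|_B = \sum_j 2^{j/2}\|f\|_{L^2(D_j-y)}$, estimate $\|f\|_{L^2(D_j-y)} \le \sum_k \|f\|_{L^2((D_j-y)\cap D_k)}$, and observe that for fixed $k$ only the $j$ with $2^j \lesssim 2^k + |y|$ contribute---since $(D_j-y)\cap D_k \neq \emptyset$ requires some $x$ with $|x|\sim 2^k$ and $|x+y|\sim 2^j$. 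Cauchy--Schwarz in $j$ against the partition identity $\sum_j \|f\|_{L^2((D_j-y)\cap D_k)}^2 = \|f\|_{L^2(D_k)}^2$ then gives $\sum_j 2^{j/2}\|f\|_{L^2((D_j-y)\cap D_k)} \lesssim (2^{k/2} + |y|^{1/2})\|f\|_{L^2(D_k)}$, and summing in $k$---using $2^{-k/2}\le 1$ to absorb the extra $|y|^{1/2}$ factor into $\|f\|_B$---yields the translation bound. Minkowski's inequality then gives
\[
\|\varphi_\ve * f\|_B \le \int |\varphi_\ve(y)| \|T_y f\|_B\,dy \lesssim \Bigl(\|\varphi\|_{L^1} + \ve^{1/2}\!\int |z|^{1/2}|\varphi(z)|\,dz\Bigr)\|f\|_B,
\]
which is (i).

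For (ii) I would take $h \in C_c^\infty(\R^n)$ and use the identity $\varphi_\ve*h - h = \int \varphi_\ve(y)(T_yh - h)\,dy$. Splitting at a threshold $\delta > 0$, the contribution of $|y|\le\delta$ is bounded by $\|\varphi\|_{L^1}\sup_{|y|\le\delta}\|T_yh - h\|_B$: for $h\in C_c^\infty$ and small $y$, $\|T_yh - h\|_B$ reduces to a finite sum of $L^2(D_j)$-norms that are jointly continuous at $y=0$, so the supremum tends to $0$ as $\delta\to 0$. The contribution of $|y|>\delta$ is controlled by the translation bound via
\[
\int_{|y|>\delta}|\varphi_\ve(y)|(2 + C|y|^{1/2})\,dy \le 2\!\int_{|z|>\delta/\ve}|\varphi(z)|\,dz + C\ve^{1/2}\!\int |z|^{1/2}|\varphi(z)|\,dz,
\]
which tends to $0$ as $\ve\to 0$ (the first summand as the tail of an $L^1$ integral, the second by the explicit $\ve^{1/2}$ factor together with the hypothesis $\int |z|^{1/2}|\varphi|\,dz<\infty$). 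Choosing $\delta$ small and then letting $\ve\to 0$ yields $\|\varphi_\ve * h - h\|_B \to 0$.

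For (iii), $C_c^\infty(\R^n)$ is dense in $B(\R^n)$: the spatial truncation $f\mapsto f\mathbf{1}_{\{|x|\le 2^N\}}$ produces error $\sum_{j>N} 2^{j/2}\|f\|_{L^2(D_j)}\to 0$, and the truncated function is approximable by $C_c^\infty$ via $L^2$-mollification on $B_{2^{N+1}}$, on which the $B$-norm is bounded by $\sim 2^{N/2}$ times the $L^2$-norm. A standard $\varepsilon/3$ argument combining (i)--(iii) delivers the pointwise-in-$t$ convergence $\|U(t,\centerdot) - \varphi_\ve * U(t,\centerdot)\|_B \to 0$ for a.e.\ $t\in (0,T)$, and (i) supplies the integrable dominant $C\|U(t,\centerdot)\|_B \in L^1(0,T)$ needed for dominated convergence. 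The main subtlety is the exponent-matching in the translation estimate: the $1/2$-moment hypothesis on $\varphi$ is exactly calibrated to the $|y|^{1/2}$ growth that translations unavoidably produce on $B(\R^n)$, leaving little slack in the large-$y$ term.
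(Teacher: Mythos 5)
Your proof is correct and follows essentially the same route as the paper's: the key quantitative input in both is that translating by $y$ moves $L^2$-mass from the annulus $D_k$ only into annuli $D_j$ with $2^j\lesssim 2^k+|y|$, which after Cauchy--Schwarz against the geometric sum $\sum_j 2^j$ costs a factor $1+|y|^{1/2}$ that the half-moment hypothesis on $\varphi$ is calibrated to absorb. The remaining differences are organizational: you isolate a translation bound on $B(\R^n)$ and run the soft part pointwise in $t$ via density of $C_c^\infty(\R^n)$ plus dominated convergence, whereas the paper estimates the $L^1((0,T);B(\R^n))$-norm directly and proves continuity of translations in that space by a tail-splitting reduction to $L^2$; the two are interchangeable.
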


\begin{proof} Let us fix $\delta>0$ and $U$ as above.  First of all notice that by an easy calculation we have for each $j\in\mathbb N_0$ that for each $t\in[0,T]$, there holds
\[
\left\| U(t,\centerdot)*\varphi_\ve\right\|_{L^2(D_j)}\leq \int_{\R^n}|\varphi_\ve (y)| \left(\int_{D_j}\left|U(t,x-y)\right|^2\,\dd x\right)^{\frac12} \, \dd y.
\]
Note that for $x\in D_j$ and $|y|<100^{-1}2^j$ we have that $|x-y|\simeq |x|$, so that $x-y\in D_{j-1}\cup D_j \cup D_{j+1}$. We then get
\[
\begin{split}
\left\| U(t,\centerdot)*\varphi_\ve\right\|_{B(\R^n)}&\lesssim \int_{\R^n}|\varphi_\ve(y)| \sum_{2^j>100 |y|} 2^{j/2} \left(\int_{D_{j-1}\cup D_j\cup D_{j+1}}\left|U(t,x)\right|^2\,\dd x\right)^{\frac12} \, \dd y
\\
&\qquad +\left\| U(t,\centerdot)\right\|_{L^2(\R^n)} \int_{\R^n}|y|^{\frac12}|\varphi_\ve(y)|\dd y  
\\
& \lesssim_\varphi \left\|U(t,\centerdot)\right\|_{B(\R^n)}+\ve^{\frac12}\left\|U(t,\centerdot)\right\|_{L^2(\R^n)}\lesssim \left\|U(t,\centerdot)\right\|_{B(\R^n)}
\end{split}
\]
uniformly in $\ve\leq 1$. Integrating for $t\in(0,T)$ gives the estimate
\[
\int_0 ^T \left\| U(t,\centerdot)*\varphi_\ve\right\|_{B(\R^n)}\dd t\lesssim_\varphi\int_0 ^T \left\| U(t,\centerdot)\right\|_{B(\R^n)}\dd t,\qquad 0<\ve <1
\]

Now a similar calculation shows that
\[
\begin{split}
&\int_0 ^T\left\| U(t,\centerdot)-\varphi_\ve * U(t,\centerdot) \right\|_{B(\R^n)}\dd t=\int_0 ^T\sum_{j\in\N_0}2^{j/2}\left\|U(t,\centerdot)-\varphi_\ve*U(t,\centerdot)\right\|_{L^2(D_j)} \dd t
\\
& \qquad \lesssim \int_{\R^n}|\varphi_\ve(y)| \int_0 ^T \sum_{2^j>100|y|}2^{j/2} \left\| U(t,\centerdot-y)-U(t,\centerdot)\right\|_{L^2(D_j)}\dd t
\\
&\qquad\qquad\qquad+\ve^{\frac12}\left(\int_0 ^T \left\|U(t,\centerdot)\right\|_{B(\R^n)}\dd t\right)\int_{\R^n}|y|^{\frac12}|\varphi(y)|\,\dd y
\\
&\qquad \eqqcolon \mathrm{I}(\ve)+\mathrm{II}(\ve).
\end{split}
\]
Since $\lim_{\ve\to 0}\mathrm{II}(\ve)=0$ it suffices to deal with the term $\mathrm{I}(\ve)$ which we split further as follows: for some $\zeta >0$ to be determined momentarily, we write
\[
\begin{split}
\mathrm{I}(\ve)&\leq  \int_{|y|<\zeta}|\varphi_\ve(y)| \left(\int_0 ^T \left\|U(t,\centerdot-y)-U(t,\centerdot)\right\|_{B(\R^n)} \dd t \right)\dd y 
\\
&\qquad\qquad+\left(\int_{|y|\geq \zeta}|\varphi_\ve(y)|\,\dd y\right) \int_0 ^T \left\| U(t,\centerdot)\right\|_{B(\R^n)} \dd t
\\
&\lesssim_\varphi \sup_{|y|<\zeta}\left(\int_0 ^T \left\|U(t,\centerdot-y)-U(t,\centerdot)\right\|_{B(\R^n)}\dd t \right)+\left(\int_{|y|\geq \zeta}|\varphi_\ve(y)|\,\dd y\right)\int_0 ^T \left\| U(t,\centerdot)\right\|_{B(\R^n)} \dd t.
\end{split}
\]
For the first summand above we use that translations are continuous in $L^1((0,T);B(\R^n))$. Thus there exists $\zeta_0>0$ such that \[
\sup_{|y|<\zeta_0}\left(\int_0 ^T \left\|U(t,\centerdot-y)-U(t,\centerdot)\right\|_{B(\R^n)} \dd t\right)<\delta.
\]
Fixing this $\zeta=\zeta_0$, the second summand above tends to $0$ as $\ve\to 0$ by the assumptions on $\varphi$. 

Finally, let us show that translations are continuous in $L^1((0,T);B(\R^n))$.  Given $\eta>0$ and a translation vector $y\in\R^n$ with $|y|<1/2$ we have 
\[
\int_0 ^T \sum_{2^j>R} 2^{j/2}\left\|U(t,\centerdot-y)-U(t,\centerdot)\right\|_{L^2(D_j)}\dd t\lesssim \int_0 ^T \sum_{2^j>R} 2^{j/2}\left\|U(t,\centerdot)\right\|_{L^2(D_{j-1}\cup D_j \cup D_{j+1})} \dd t \lesssim \eta
\]
if $R$ is sufficiently large depending on $\eta$. On the other hand
\[
\int_0 ^T \sum_{2^j\leq R} 2^{j/2}\left\|U(t,\centerdot-y)-U(t,\centerdot)\right\|_{L^2(D_j)}\dd t\lesssim R^{1/2}\int_0 ^T\left\|U(t,\centerdot-y)-U(t,\centerdot)\right\|_{L^2(\R^n)}\dd t\lesssim \eta
\]
whenever $|y|$ is sufficiently small depending on {$R$ and} $\eta$, using that translations are continuous in the space $L^1((0,T);L^2(\R^n))$.
\end{proof}

\section{Proof of \texorpdfstring{\Cref{th:bounded}}{th:bounded}}\label{sec:uniqueness}
In this section we prove \Cref{th:bounded}. To do so, we use the orthogonality relation of \Cref{prop:orthogonality_time-independent}, proved in
 \Cref{sec:orthogonality}. We will plug into it solutions to the Schr\"odinger equation of the form $u(t,x)=e^{-i\lambda^2 t} w(x)$, with the stationary states $w$ being as those constructed in \Cref{sec:stationary_states}. The goal is, on the one hand, to check that the most useful 
information of the stationary states are contained in the Herglotz waves, and on the other 
hand, to see that from the Herglotz waves we can compute the Fourier transform of $V_1 - V_2$.

We start by collecting some properties about the densities we use to define the Herglotz waves
and then we curry the analysis to check the Fourier transform of $V_1 - V_2$ vanishes.

\subsection{Appropriate densities on \texorpdfstring{$\Sph^{n-1}$}{S}}
Consider $\chi \in \mathcal{D} (\R^n)$ such that $0 \leq \chi (\xi) \leq 1$ for 
all $\xi \in \R^n$ and $\supp \chi \subset \{ \xi \in \R^n : |\xi| < 1/2 \}$.
For $\varepsilon > 0$, define
\[ \chi_\varepsilon (\xi) = \frac{1}{\varepsilon^{n - 1}} \chi (\xi^\prime/\varepsilon, (\xi_n - 1)/\varepsilon^2) \quad \forall \xi \in \R^n. \]
Here we use the notation $\xi^\prime = (\xi \cdot e_1, \dots, \xi \cdot e_{n-1})$ and 
$\xi_n = \xi  \cdot e_n$ with $\{ e_1, \dots, e_n \}$ denoting the standard basis of
$\R^n$. We are interested in the restriction of this function to $\Sph^{n - 1}$:
\[ f^{e_n}_\varepsilon = \chi_\varepsilon|_{\Sph^{n - 1}}.\]
We have taken a parabolic scaling in the definition of $\chi_\varepsilon$ to ensure
that the support $f^{e_n}_\varepsilon$ is \textit{like} a geodesic ball in 
$\Sph^{n - 1}$ of radius $\varepsilon$ and centred at $e_n$. Finally, if 
$Q \in \Orth (n)$, we define
\[ f^{Qe_n}_\varepsilon (\theta) = f^{e_n}_\varepsilon (Q^\T \theta) \quad \forall \theta \in \Sph^{n-1}. \]

\begin{lemma}\label[lemma]{lem:f_epsilon} \sl For every $Q \in \Orth (n)$, we have that
\[ \lim_{\varepsilon \to 0} \| f^{Qe_n}_\varepsilon \|_{L^1(\Sph^{n - 1})} =  \int_{\R^{n-1}} \chi(\eta, - |\eta|^2/2) \dd \eta. \]
Furthermore, there exists a constant $C > 0$ that only depends on $n$ such that
\[\| f^{Qe_n}_\varepsilon \|_{L^2(\Sph^{n-1})} \leq \frac{C}{\varepsilon^{(n-1)/2}}\]
for all $\varepsilon \in (0, 1]$ and $Q \in \Orth(n)$.
\end{lemma}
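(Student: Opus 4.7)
The plan is to reduce everything to the case $Q=\Id$ via rotation invariance of the surface measure on $\Sph^{n-1}$, and then to parametrize the cap of support near $e_n$ to pass the relevant integrals to a flat integral over a parabolically-rescaled region of $\R^{n-1}$.

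First, I would observe that, since $\sigma$ is $\Orth(n)$-invariant and $f^{Qe_n}_\varepsilon(\theta)=f^{e_n}_\varepsilon(Q^\T\theta)$, we have $\|f^{Qe_n}_\varepsilon\|_{L^p(\Sph^{n-1})}=\|f^{e_n}_\varepsilon\|_{L^p(\Sph^{n-1})}$ for every $p\in[1,\infty]$ and every $Q\in\Orth(n)$. Hence it suffices to prove both assertions for $f^{e_n}_\varepsilon$. Since $\supp \chi\subset\{|\xi|<1/2\}$, the function $\chi_\varepsilon$ is supported in $\{|\xi^\prime|<\varepsilon/2,\,|\xi_n-1|<\varepsilon^2/2\}$. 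For $\varepsilon$ small enough this set meets $\Sph^{n-1}$ only inside a small spherical cap around $e_n$ sitting in the upper hemisphere. Parametrise this hemisphere by $\theta=(\theta^\prime,\sqrt{1-|\theta^\prime|^2})$ with $|\theta^\prime|<1$, so that the surface measure pulls back to $\dd\sigma=\dd\theta^\prime/\sqrt{1-|\theta^\prime|^2}$.

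For the $L^1$ limit, I would substitute the parametrisation and change variables $\theta^\prime=\varepsilon\eta$ to obtain
\[
\|f^{e_n}_\varepsilon\|_{L^1(\Sph^{n-1})}=\int_{|\eta|<1/(2\varepsilon)} \chi\Bigl(\eta,\frac{\sqrt{1-\varepsilon^2|\eta|^2}-1}{\varepsilon^2}\Bigr)\,\frac{\dd\eta}{\sqrt{1-\varepsilon^2|\eta|^2}}.
\]
Inside the support of the integrand one has $|\eta|<1/2$, so $\varepsilon^2|\eta|^2\to 0$ uniformly as $\varepsilon\to 0$. A Taylor expansion gives $(\sqrt{1-\varepsilon^2|\eta|^2}-1)/\varepsilon^2\to -|\eta|^2/2$ pointwise, while $\chi$ is continuous and the integrand is bounded uniformly by $\|\chi\|_{L^\infty}\,\ind_{\{|\eta|<1/2\}}(1-\varepsilon^2/4)^{-1/2}$. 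Dominated convergence then yields the claimed limit $\int_{\R^{n-1}}\chi(\eta,-|\eta|^2/2)\,\dd\eta$.

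For the $L^2$ bound, performing the same parametrisation and change of variables gives
\[
\|f^{e_n}_\varepsilon\|_{L^2(\Sph^{n-1})}^2=\frac{1}{\varepsilon^{n-1}}\int_{|\eta|<1/(2\varepsilon)} \Bigl|\chi\Bigl(\eta,\frac{\sqrt{1-\varepsilon^2|\eta|^2}-1}{\varepsilon^2}\Bigr)\Bigr|^2\,\frac{\dd\eta}{\sqrt{1-\varepsilon^2|\eta|^2}}.
\]
The integrand is supported in $\{|\eta|<1/2\}$ and is bounded there by $\|\chi\|_{L^\infty}^2(1-\varepsilon^2/4)^{-1/2}$ uniformly for $\varepsilon\in(0,1]$, giving the desired estimate with a constant depending only on $n$ and $\chi$. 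The only mildly delicate point is to check, for $\varepsilon$ close to $1$, that the support of $\chi_\varepsilon|_{\Sph^{n-1}}$ does not extend beyond the upper hemisphere; this is ensured by the condition $|\xi_n-1|<\varepsilon^2/2\leq 1/2$, which forces $\xi_n\geq 1/2>0$. I do not foresee a substantive obstacle here: both parts are essentially a single change-of-variables plus dominated convergence.
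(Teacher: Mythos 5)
Your proposal is correct and follows essentially the same route as the paper: reduction to $Q=\Id$ by rotation invariance, graph parametrisation of the cap near $e_n$, the parabolic change of variables $\theta'=\varepsilon\eta$, and dominated convergence. The only cosmetic differences are that the paper uses the exact identity $(\sqrt{1-|\eta|^2}-1)/\varepsilon^2=-|\eta/\varepsilon|^2/(1+\sqrt{1-|\eta|^2})$ in place of your Taylor expansion, and extracts both conclusions from a single $L^p$ computation for $p\in\{1,2\}$, whereas your direct uniform bound on the $L^2$ integrand is, if anything, slightly cleaner for the claim on all of $\varepsilon\in(0,1]$.
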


\begin{proof}
Since the surface measure $\sigma$ in $\Sph^{n-1}$ is invariant under transformation{s} in $\Orth(n)$
we have that
\[ \| f^{Qe_n}_\varepsilon \|_{L^p(\Sph^{n-1})}^p = \| f^{e_n}_\varepsilon \|_{L^p(\Sph^{n-1})}^p = \frac{1}{\varepsilon^{p(n - 1)}} \int_{\R^{n-1}} \chi \Big(\frac{\eta}{\varepsilon}, \frac{(1 - |\eta|^2)^{1/2} - 1}{\varepsilon^2} \Big)^p \frac{1}{(1 - |\eta|^2)^{1/2}} \dd \eta, \]
for $p \in \{ 1, 2 \}$. Multiplying and dividing by the quantity $1 + (1 - |\eta|^2)^{1/2}$
we have that
\[\frac{(1 - |\eta|^2)^{1/2} - 1}{\varepsilon^2} = - \frac{|\eta/\varepsilon|^2}{1 + (1 - |\eta|^2)^{1/2}}\]
Then, after the change of variable $\eta = \varepsilon \kappa$ we obtain
\[\| f^{Qe_n}_\varepsilon \|_{L^p(\Sph^{n-1})}^p = \varepsilon^{(1-p)(n - 1)} \int_{\R^{n-1}} \chi \Big(\kappa, - \frac{|\kappa|^2}{1 + (1 - |\varepsilon\kappa|^2)^{1/2}} \Big)^p \frac{1}{(1 - |\varepsilon\kappa|^2)^{1/2}} \dd \kappa. \]
Since
\[\lim_{\varepsilon \to 0} \int_{\R^{n-1}} \chi \Big(\kappa, - \frac{|\kappa|^2}{1 + (1 - |\varepsilon\kappa|^2)^{1/2}} \Big)^p \frac{1}{(1 - |\varepsilon\kappa|^2)^{1/2}} \dd \kappa = \int_{\R^{n-1}} \chi (\kappa, - |\kappa|^2/2)^p \dd \kappa, \]
we can derive the conclusions announced in the statement for $p \in \{ 1, 2 \}$.
\end{proof}

\subsection{The Fourier transform of \texorpdfstring{$V_1 - V_2$}{V} vanishes} Given $\kappa \in \R^n$ consider $\nu \in \Sph^{n-1}$ such that $\kappa \cdot \nu = 0$.
For $\lambda \geq |\kappa|/2$ we define
\begin{align*}
\omega_1 = \frac{1}{\lambda} \frac{\kappa}{2} + \Big( 1 - \frac{|\kappa|^2}{4 \lambda^2} \Big)^{1/2} \nu,\\
\omega_2 = \frac{1}{\lambda} \frac{\kappa}{2} - \Big( 1 - \frac{|\kappa|^2}{4 \lambda^2} \Big)^{1/2} \nu.
\end{align*}
Note that $\omega_1$ and $\omega_2$ belong to $\Sph^{n-1}$. Let $Q_1$ and $Q_2$ denote 
two matrices in $\Orth(n)$ so that $\omega_j = Q_j e_n$ for $j \in \{ 1, 2 \}$. We 
construct the stationary states $w_j = u_j + v_j$ of \Cref{prop:solutions} with
\begin{align*}
u_j &= E_\lambda f^{Q_j e_n}_\varepsilon,\\
v_j &= (\Id - P_\lambda \circ V_j)^{-1} [P_\lambda(V_j u_j)];
\end{align*}
{here} $\lambda > \max(|\kappa|/2, \lambda_{V_j})$ and {$0<\varepsilon \leq 1$}.
Recall that, for every $\delta > 0$ there is a constant $C>0$, that {depends only on} $n$,
such that
\begin{equation}\label{in:decay_remainder_B*}
\| v_j \|_{B^\ast (\R^n)} \leq \frac{C\lambda_{V_j}}{\lambda} \| u_j \|_{B^\ast (\R^n)}
\end{equation}
for all $\lambda \geq \max(|\kappa|/2, \lambda_{V_j} + \delta)$ and $\varepsilon > 0$.
Note that the functions
\begin{align*}
& (t, x) \in \R \times \R^n \longmapsto e^{-i\lambda^2 t} w_1(x) \in \C,\\
& (t, x) \in \R \times \R^n \longmapsto e^{-i\lambda^2 t} \overline{w_2(x)} \in \C
\end{align*}
are solutions to the Schr\"odinger equation with potentials $V_1$ and $\overline{V_2}$, respectively.
Plugging them into the orthogonality 
relation stated in \Cref{prop:orthogonality_time-independent},  {and integrating in time}, we obtain that
\begin{equation}\label{id:leading=reminder}
\int_{\R^n} (V_1 - V_2) u_1 u_2\, = - \int_{\R^n} (V_1 - V_2) [u_1 v_2 + v_1 u_2 + v_1 v_2]\,
\end{equation}
for all {$\lambda > \max(|\kappa|/2, \lambda_{V_1}+\delta, \lambda_{V_2}+\delta)$}
and {$0<\varepsilon \leq 1$.}

We start estimating the right-hand side of the identity \eqref{id:leading=reminder}.
By the inequalities \eqref{in:multiplication_V_bilinearform} and 
\eqref{in:decay_remainder_B*}, and \Cref{lem:B*extension},
we can conclude that, for every $\delta > 0$
there is a constant $C>0$, that only depends on $n$ and the quantity
$ \vvvert V_1 \vvvert + \vvvert V_2 \vvvert $, such that
\[\Big| \int_{\R^n} (V_1 - V_2) [u_1 v_2 + v_1 u_2 + v_1 v_2]\, \Big| \leq C \Big(  \frac{1}{\lambda^n} + \frac{1}{\lambda^{n+1}} \Big) \| f^{Q_1 e_n}_\varepsilon \|_{L^2(\Sph^{n-1})} \| f^{Q_2 e_n}_\varepsilon \|_{L^2(\Sph^{n-1})} \]
for all $\lambda \geq \max(|\kappa|/2, \lambda_{V_1}+\delta, \lambda_{V_2}+\delta)$
and $\varepsilon > 0$.

Next, we turn our attention to the left-hand side of the identity 
\eqref{id:leading=reminder}. For convenience, we call $ F = V_1 - V_2 $.
Since $V_1$ and $V_2$ belong to $L^1 (\R^n)$, we can apply 
Fubini's theorem to write
\[ \int_{\R^n} F u_1 u_2\, = (2\pi)^{n/2} \int_{\Sph^{n-1} \times \Sph^{n-1}} \widehat{F} \big( \lambda (\theta + \omega) \big) f^{Q_1 e_n}_\varepsilon (\theta) f^{Q_2 e_n}_\varepsilon(\omega) \, \dd \mu (\theta, \omega), \]
where $\mu = \sigma \otimes \sigma$. With the choices of $\omega_j$ and $Q_j$ with
$j \in \{ 1, 2 \}$ we can write
\begin{align*}
&\frac{1}{(2\pi)^{n/2}} \int_{\R^n} F u_1 u_2\, = \widehat{F}(\kappa) \| f^{Q_1 e_n}_\varepsilon \|_{L^1(\Sph^{n-1})} \| f^{Q_2 e_n}_\varepsilon \|_{L^1(\Sph^{n-1})} \\
& \qquad + \int_{\Sph^{n-1} \times \Sph^{n-1}} \big[ \widehat{F} \big( \lambda (\theta + \omega) \big) - \widehat{F} \big( \kappa \big) \big] f^{Q_1 e_n}_\varepsilon (\theta) f^{Q_2 e_n}_\varepsilon(\omega) \, \dd \mu (\theta, \omega).
\end{align*}
The second summand of the right-hand side of the previous identity
can be bounded by
\[ \sup_{(\theta, \omega) \in K^1_\varepsilon \times K^2_\varepsilon} \big| \widehat{F} \big( \lambda (\theta + \omega) \big) - \widehat{F} \big( \kappa \big) \big| \| f^{Q_1 e_n}_\varepsilon \|_{L^1(\Sph^{n-1})} \| f^{Q_2 e_n}_\varepsilon \|_{L^1(\Sph^{n-1})} \]
where $K^j_\varepsilon$ denotes the support of $f^{Q_j e_n}_\varepsilon$.
Note that
\[ \widehat{F} \big( \lambda (\theta + \omega) \big) - \widehat{F} \big( \kappa \big) =
\frac{1}{(2\pi)^{n/2}} \int_{\R^n} [ e^{i [\kappa - \lambda(\theta + \omega)] \cdot x} - 1] e^{-i \kappa \cdot x} F(x) \, \dd x, \]
and
\[ \kappa - \lambda(\theta + \omega) = \lambda [(Q_1 e_n - \theta) + (Q_2 e_n - \omega)]. \]
Additionally,
\[ (\theta, \omega) \in K^1_\varepsilon \times K^2_\varepsilon \Rightarrow \big| \lambda [(Q_1 e_n - \theta) + (Q_2 e_n - \omega)] \big| < \lambda \varepsilon. \]
Hence,
\[\sup_{(\theta, \omega) \in K^1_\varepsilon \times K^2_\varepsilon} \big| \widehat{F} \big( \lambda (\theta + \omega) \big) - \widehat{F} \big( \kappa \big) \big| \leq \sup_{|\xi| < \lambda\varepsilon} \Big| \frac{1}{(2\pi)^{n/2}} \int_{\R^n} [ e^{i \xi \cdot x} - 1] e^{-i \kappa \cdot x} F(x) \, \dd x \Big|. \]
Introducing the function
\[\gamma (\rho) = \frac{1}{(2\pi)^{n/2}} \int_{\R^n} \sup_{|\xi| < \rho} | e^{i \xi \cdot x} - 1| |F(x)| \, \dd x \quad \forall \rho \in (0, \infty), \]
we can write 
\begin{align*}
\Big| \int_{\Sph^{n-1} \times \Sph^{n-1}} \big[ \widehat{F} \big( \lambda (\theta + \omega) \big) - \widehat{F} \big( \kappa \big) \big] \, & f^{Q_1 e_n}_\varepsilon (\theta) f^{Q_2 e_n}_\varepsilon(\omega) \, \dd \mu (\theta, \omega) \Big|  \\
& \leq \gamma ( \lambda \varepsilon ) \| f^{Q_1 e_n}_\varepsilon \|_{L^1(\Sph^{n-1})} \| f^{Q_2 e_n}_\varepsilon \|_{L^1(\Sph^{n-1})}
\end{align*}
for all $\lambda \geq |\kappa|/2$ and $\varepsilon > 0$.
It is convenient to note at this point that, since $F \in L^1(\R^n)$, we can conclude by 
the {dominated} convergence theorem that
\[ \lim_{\rho \to 0} \gamma (\rho) = 0. \]

Gathering all this information and using \Cref{lem:f_epsilon}, we can ensure that,
for every $\delta > 0$, there is a constant $C>0$, that only depends on 
$n$, $|\kappa|$, and the quantity $ \vvvert V_1 \vvvert + \vvvert V_2 \vvvert $, such that
\[ |\widehat{F}(\kappa)| \leq \gamma ( \lambda \varepsilon ) + \frac{C}{\lambda^n \varepsilon^{n-1}} \]
for all $\lambda \geq \max(|\kappa|/2, \lambda_{V_1}+\delta, \lambda_{V_2}+\delta)$
and $\varepsilon > 0$. Choosing 
$\varepsilon = \lambda^{-(1+1/n)}$ the previous inequality takes the form
\[ |\widehat{F}(\kappa)| \leq \gamma ( \lambda^{-1/n} ) + \frac{C}{\lambda^{1/n}}. \]
{Letting} $\lambda$ {go} to infinity, we obtain that $\widehat{F}(\kappa) = 0$. Since
$\kappa$ was an arbitrary vector of $\R^n$ we can conclude that the Fourier transform
of $F$ is identically zero. This means that $V_1 = V_2$ {$\mathrm{a.e.}$}, which {completes the proof of} 
\Cref{th:bounded}.

\sloppy
\begin{acknowledgements}
All the authors were supported by the grant PID2021-122156NB-I00 funded by 
MICIU/AEI/10.13039/501100011033 and FEDER, UE.
Additionally, P.C. and T.Z. were also funded by BCAM-BERC 2022-2025
and Severo Ochoa CEX2021-001142-S. I.P. was also supported by grant IT1615-22 of the Basque 
Government. Finally P.C. and I.P. are also funded by Ikerbasque, the Basque Foundation for 
Science.
\end{acknowledgements}

\bibliography{references}{}

\begin{thebibliography}{10}

\bibitem{zbMATH03524039}
S.~Agmon and L.~Hoermander.
\newblock Asymptotic properties of solutions of differential equations with
  simple characteristics.
\newblock {\em J. Anal. Math.}, 30:1--38, 1976.

\bibitem{zbMATH01886353}
Lucie Baudouin and Jean-Pierre Puel.
\newblock Uniqueness and stability in an inverse problem for the
  {Schr{\"o}dinger} equation.
\newblock {\em Inverse Probl.}, 18(6):1537--1554, 2002.

\bibitem{zbMATH06733553}
Mourad Bellassoued.
\newblock Stable determination of coefficients in the dynamical
  {Schr{\"o}dinger} equation in a magnetic field.
\newblock {\em Inverse Probl.}, 33(5):36, 2017.
\newblock Id/No 055009.

\bibitem{zbMATH05549395}
Mourad Bellassoued and Mourad Choulli.
\newblock Logarithmic stability in the dynamical inverse problem for the
  {Schr{\"o}dinger} equation by arbitrary boundary observation.
\newblock {\em J. Math. Pures Appl. (9)}, 91(3):233--255, 2009.

\bibitem{zbMATH05655673}
Mourad Bellassoued and Mourad Choulli.
\newblock Stability estimate for an inverse problem for the magnetic
  {Schr{\"o}dinger} equation from the {Dirichlet}-to-{Neumann} map.
\newblock {\em J. Funct. Anal.}, 258(1):161--195, 2010.

\bibitem{zbMATH05839237}
Mourad Bellassoued and David Dos Santos~Ferreira.
\newblock Stable determination of coefficients in the dynamical anisotropic
  {Schr{\"o}dinger} equation from the {Dirichlet}-to-{Neumann} map.
\newblock {\em Inverse Probl.}, 26(12):30, 2010.
\newblock Id/No 125010.

\bibitem{zbMATH06769718}
Ibtissem Ben~A{\"{\i}}cha.
\newblock Stability estimate for an inverse problem for the {Schr{\"o}dinger}
  equation in a magnetic field with time-dependent coefficient.
\newblock {\em J. Math. Phys.}, 58(7):071508, 21, 2017.

\bibitem{zbMATH06864429}
Ibtissem Ben~A{\"{\i}}cha and Youssef Mejri.
\newblock Simultaneous determination of the magnetic field and the electric
  potential in the {Schr{\"o}dinger} equation by a finite number of boundary
  observations.
\newblock {\em J. Inverse Ill-Posed Probl.}, 26(2):201--209, 2018.

\bibitem{zbMATH07801151}
Pedro Caro and Alberto Ruiz.
\newblock An inverse problem for data-driven prediction in quantum mechanics.
\newblock {\em J. Math. Phys.}, 65(1):28, 2024.
\newblock Id/No 011505.

\bibitem{zbMATH06516179}
Mourad Choulli, Yavar Kian, and Eric Soccorsi.
\newblock Stable determination of time-dependent scalar potential from boundary
  measurements in a periodic quantum waveguide.
\newblock {\em SIAM J. Math. Anal.}, 47(6):4536--4558, 2015.

\bibitem{zbMATH05379127}
Gregory Eskin.
\newblock Inverse problems for the {Schr{\"o}dinger} equations with
  time-dependent electromagnetic potentials and the {Aharonov}-{Bohm} effect.
\newblock {\em J. Math. Phys.}, 49(2):022105, 18, 2008.

\bibitem{zbMATH00850580}
Piotr~G. Grinevich and Roman~G. Novikov.
\newblock Transparent potentials at fixed energy in dimension two.
  {Fixed}-energy dispersion relations for the fast decaying potentials.
\newblock {\em Commun. Math. Phys.}, 174(2):409--446, 1995.

\bibitem{HormII}
Lars H\"ormander.
\newblock {\em The analysis of linear partial differential operators. {II}},
  volume 257 of {\em Grundlehren der mathematischen Wissenschaften [Fundamental
  Principles of Mathematical Sciences]}.
\newblock Springer-Verlag, Berlin, 1983.
\newblock Differential operators with constant coefficients.

\bibitem{zbMATH01950198}
Lars H{\"o}rmander.
\newblock {\em The analysis of linear partial differential operators. {I}:
  {Distribution} theory and {Fourier} analysis.}
\newblock Class. Math. Berlin: Springer, reprint of the 2nd edition 1990
  edition, 2003.

\bibitem{zbMATH07033617}
Yavar Kian and Eric Soccorsi.
\newblock H{\"o}lder stably determining the time-dependent electromagnetic
  potential of the {Schr{\"o}dinger} equation.
\newblock {\em SIAM J. Math. Anal.}, 51(2):627--647, 2019.

\bibitem{zbMATH07242805}
Yavar Kian and Alexander Tetlow.
\newblock H{\"o}lder-stable recovery of time-dependent electromagnetic
  potentials appearing in a dynamical anisotropic {Schr{\"o}dinger} equation.
\newblock {\em Inverse Probl. Imaging}, 14(5):819--839, 2020.

\bibitem{zbMATH04103631}
R.~G. Novikov and G.~M. Khenkin.
\newblock The {{\({\bar \partial}\)}}-equation in the multidimensional inverse
  scattering problem.
\newblock {\em Russ. Math. Surv.}, 42(3):109--180, 1987.

\bibitem{zbMATH00554321}
Roman~G. Novikov.
\newblock The inverse scattering problem at fixed energy for the
  three-dimensional {Schr{\"o}dinger} equation with an exponentially decreasing
  potential.
\newblock {\em Commun. Math. Phys.}, 161(3):569--595, 1994.

\bibitem{zbMATH02208390}
Gunther Uhlmann and Andr{\'a}s Vasy.
\newblock Fixed energy inverse problem for exponentially decreasing potentials.
\newblock {\em Methods Appl. Anal.}, 9(2):239--247, 2002.

\end{thebibliography}
\bibliographystyle{plain}

\end{document}